\theoremstyle{definition}
\newtheorem{Def}[subsubsection]{Definition}
\newtheorem{example}[subsubsection]{Example}
\newtheorem{rem}[subsubsection]{Remark}
\theoremstyle{plain}
\newtheorem{prop}[subsubsection]{Proposition}
\newtheorem{thm}[subsubsection]{Theorem}
\newtheorem{lem}[subsubsection]{Lemma}
\newtheorem{cor}[subsubsection]{Corollary}
\def\e{\varepsilon}
\newcommand{\nc}{\newcommand}
\nc{\redtext}[1]{\textcolor{red}{#1}}
\nc{\bluetext}[1]{\textcolor{blue}{#1}}
\nc{\greentext}[1]{\textcolor{green}{#1}}
\nc{\yl}[1]{\redtext{From yq: #1}}
\nc{\zb}[1]{\redtext{From zb: #1}}
\begin{document}

\title{ A Glimpse To Quantum Cluster Superalgebras}

\author{Haitao Ma$^{1}$ $\thanks{e-mail: mahaitao871219@163.com}$, Yanmin Yang$^{2}$ $\thanks{e-mail: 491895274@qq.com}$, Zhu-Jun Zheng$^{1}$ $\thanks{e-mail: zhengzj@scut.edu.cn}$
\\[.3cm]
{\small $^{1}$School of Mathematics, South China University of Technology,
Guangzhou 510641, China}\\[.3cm]
{\small $^2$Department of Mathematics, Guangzhou University, 510006, China}
}
\maketitle

\begin{abstract}
In this paper, we introduce and study the quantum deformations of the cluster superalgebras. Then we prove the quantum version of the Laurent phenomenon for the super-case.
\end{abstract}

{Key words:} Cluster Superalgebras, Laurent Phenomenon

\section{Introduction}

Cluster algebras were discovered by S.Fomin and A.Zelevinsky~\cite{FZ1}-\cite{FZ4}.
They are a family of commutative rings designed to serve as an algebraic framework for the
theory of total positivity and canonical bases in semisimple groups and their quantum analogs.
The quantum deformation of cluster algebras were introduced by A.Berenstein and A.Zelevinsky\cite{BZ1}.
So far, lots of mathematicians discovered a profound connection between quantum groups and (quantum)cluster algebras.
For example, D.Hernandez and B.Leclerc used the cluster algebras to study the Grothendieck rings  of the certain monoidal subcategories of the representation of quantum affine algebras \cite{HL1}. And they studied the t-deformation $\mathcal{K}_t$ of the Grothendieck ring of  the subcategory mentioned above. They obtained that there was a quantum cluster algebra structure on $\mathcal{K}_t$\cite{HL2}. D.Hernandez and B.leclerc also used the cluster algebra algorithm to calculate q-characters of Kirillov-Reshetikhin modules for any untwisted quantum affine algebra $U_q(\hat{g})$ \cite{HL3}.
H.Nakajiama embed cluster algebras into the Grothendieck rings $\mathcal{R}$ of the categories of representations
of quantum loop algebras $U_q(L_g)$ of a symmetric Kac-Moody Lie algebra by the way of perverse sheaves on graded quiver varieties\cite{N1}.
H.Nakajima proposed an approach to Geiss-Leclerc-Schroer's conjecture on the cluster algebra structure on the coordinate ring of an unipotent subgroup and the dual canonical base\cite{N2}. The quantum version of the work in \cite{N1} was obtained by Y.Kimura and F.Qin \cite{KQ1}.

 Recently, V.Ovsienko introduced the cluster superalgebras when he studied the supercase of Coxeter's frieze patterns\cite{MOT1}\cite{O1}. The natural questions are what the quantum deformation of the cluster superalgebras are and if there are  some connections between the quantum cluster superalgebras and the quantum  affine superalgebras similar to the classical case. In order to solve the questions mentioned above, the first step we need to do is giving a reasonable concept of quantum cluster superalgeras.

Our approach to quantum cluster superalgebras is similar to the approach appear in \cite{BZ1}.  The cluster superalgebra structure is
completely determined by an extend quiver that encodes all the exchange relations. Now the quantum deformation of cluster superalgebras $\mathcal{A}$ is a $Q(q)$-algebra obtained by making each cluster into a super-quasi-commuting family $\{X_1,X_2,\cdots,X_{n+m}\}$; this means that $X_iX_j = (-1)^{\tau(e_i,e_j)}q^{\lambda_{ij}}X_jX_i$ for a
skew-symmetric integer $(m+n)\times (m+n)$ matrix $\Lambda = (\lambda_{ij})$ . In doing so, we have to modify the mutation process and the exchange relations so that all the adjacent quantum
super-clusters will also be quasi-super-commuting.
This imposes the compatibility relation between the quasi-commutation matrix $\Lambda$ and the exchange matrix $B$. Then we have to develop a formalism that allows us to show that any compatible matrix pair $(\widetilde{\mathcal{Q}}, \Lambda)$ give rise to a well defined quantum cluster superalgebra.

The paper is organized as follows. In section 2 we present necessary definitions and facts from the theory of cluster superalgebras. In section 3 and section 4, we introduce compatible matrix pairs $(\widetilde{\mathcal{Q}}, \Lambda)$ and their mutations, and how the compatible matrix pair $(\widetilde{\mathcal{Q}}, \Lambda)$ gives a well defined quantum cluster superalgebra. Section 5  introduces the Laurent phynomenon of quantum cluster superalgebras.

\section{Preliminary}

In this section, we introduce the notion of cluster superalgebra.
Everywhere in this paper, the odd coordinates are frozen. All the concepts and results in this section are in the paper\cite{O1}.

\subsection{The Extended Quiver}

We introduce the notion of extended quiver following V.Ovsienko's work \cite{O1}.

\begin{Def}
Given a quiver $\mathcal{Q}$  with no loops and no $2$-cycles,
an  extended quiver $\widetilde{\mathcal{Q}}$ with  underlying quiver $\mathcal{Q}$,
is a quiver defined as follows.

{\bf A}.
$\widetilde{\mathcal{Q}}$ has $m$ extra ``colored'' vertices labeled by the odd coordinates
$\{\xi_1,\ldots,\xi_m\}$, so that
$$
\widetilde{\mathcal{Q}}_0=\{x_1,\ldots,x_n,\xi_1,\ldots,\xi_m\}.
$$

{\bf B}.
Some of the new vertices $\{\xi_1,\ldots,\xi_m\}$ are related to
the vertices $\{x_1,\ldots,x_n\}$ of the underlying quiver $\mathcal{Q}$ by ingoing or outgoing arrows. That is,
$$
\widetilde{\mathcal{Q}}_1=\mathcal{Q}_1\cup_{k}\{\xi_i\to{}x_k,i\in{}I_k\,,x_k\to\xi_j,j\in{}J_k\}.
$$
For every $1\leq{}k\leq{}n$, $I_k\cap{}J_k=\emptyset$,
where $I_k = \{ i | \xi \to{} x_k \in \widetilde{\mathcal{Q}}_1\}$ and $J_k = \{ j | x_k \to{} \xi_j \in \widetilde{\mathcal{Q}}_1\}$.

\end{Def}

Let $\widetilde{\mathcal{Q}}_2 = \{\xi_i\to{}v_k\to{}\xi_j | \xi_1, \xi_3 \in \mathcal{Q}_1, v_k \in \mathcal{Q}_0\}$,where
$\mathcal{Q}_0 = \{x_1, x_2, \cdots, x_n\}$, $\mathcal{Q}_1 = \{\xi_1, \xi_2, \cdots, \xi_m\}$.

\begin{Def}
Given an extended quiver~$\widetilde{\mathcal{Q}}$ and an even vertex $x_k\in\mathcal{Q}_0$,
the mutation~$\mu_k$ is defined by the
following rules:
\begin{enumerate}
\item[(0)]
the underlying quiver $\mathcal{Q}\subset\widetilde{\mathcal{Q}}$ mutates
according to the same rules as the classical case;
\item[(1)]
given a $2$-path $(\xi_i\to x_k\to\xi_j)$,
add the $2$-paths $(\xi_i\to x_\ell\to\xi_j)$ for all $x_\ell\in\mathcal{Q}_0$
connected to~$x_k$ by an arrow $(x_k\to{}x_\ell)$, we define:
$$
\xymatrix{
&{\color{red}\xi_i}\ar@{->}[d]&
{\color{red}\xi_i}\ar@<-2pt>@{<-}[ld]\\
x_m&x_k\ar@{<-}[l]&x_\ell\ar@{<-}[l]
}
\qquad
\stackrel{\mu_k}{\Longrightarrow}
\qquad
 \xymatrix{
&{\color{red}\xi_i}\ar@<2pt>@{->}[rd]\ar@{<-}[d]&
{\color{red}\xi_j}\ar@<-2pt>@{->}[ld]\ar@{<-}[d]\\
x_m\ar@/^-1pc/[rr]&x_k'\ar@{->}[l]&x_\ell\ar@{->}[l]
}
$$
\item[(2)]  reverse all the arrows at $x_k$;
\item[(3)]  remove all the odd-even-odd $2$-paths (if any)
with opposite orientations:
$(\xi_i\to{}x_\ell\to\xi_j)$ and
$(\xi_i\leftarrow{}x_\ell\leftarrow\xi_j)$ created by rule (1).
\end{enumerate}
\end{Def}

Observe that every mutation $\mu_k$ of the quiver~$\widetilde{\mathcal{Q}}$ is an involution.

\begin{Def}
A mutation of the extended quiver $\widetilde{\mathcal{Q}}$
at a vertex $x_k\in\mathcal{Q}_0$ is  allowed if
every vertex $\xi_i\in{}I_\ell\cup{}I_k$ is connected to
every vertex $\xi_j\in{}J_\ell\cup{}J_k$ by a $2$-path
$(\xi_i\to{}x_\ell\to\xi_j)$ in the resulting quiver~$\mu_k(\widetilde{\mathcal{Q}})$.
\end{Def}

The following statement provides with a necessary and sufficient condition
for the mutation at a vertex $x_k$ to be allowed.

\begin{lem}
Given an extended quiver $\widetilde{\mathcal{Q}}$,
the mutation at a given vertex $x_k$ is allowed if and only if
for every vertex $x_\ell\in\mathcal{Q}$ connected to $x_k$ by an outgoing arrow $x_k\to{}x_\ell$,
(at least) one of the following conditions is satisfied:

\begin{enumerate}

\item[(a)]
$I_k=I_\ell$;

\item[(b)]
$J_k=J_\ell$;

\item[(c)]
$I_k=J_k=\emptyset$.

\item[(d)]
$I_k=J_\ell$, and $J_k=I_\ell$;

\item[(e)]
$I_\ell=J_\ell=\emptyset$.
\end{enumerate}

\end{lem}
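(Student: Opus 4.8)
The plan is to translate the geometric condition defining an \emph{allowed} mutation into a purely combinatorial one about the sets $I_k,J_k,I_\ell,J_\ell$, by tracking exactly which odd–even–odd $2$-paths survive the three substeps (1)–(3) of $\mu_k$. First I would fix an even vertex $x_k$ and an even vertex $x_\ell$ with an outgoing arrow $x_k\to x_\ell$; by Definition of allowedness we must check that after $\mu_k$ every $\xi_i\in I_k\cup I_\ell$ is joined to every $\xi_j\in J_k\cup J_\ell$ by a $2$-path through $x_\ell$. So the core of the proof is a bookkeeping lemma: determine $\mu_k(\widetilde{\mathcal Q})$'s set of $2$-paths at $x_\ell$ in terms of the original data. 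Before mutation the $2$-paths at $x_\ell$ are exactly $\{\xi_i\to x_\ell\to\xi_j : i\in I_\ell,\ j\in J_\ell\}$. Rule (1) adds, because of the arrow $x_k\to x_\ell$, all paths $\xi_i\to x_\ell\to\xi_j$ with $i\in I_k$, $j\in J_k$ (these come from the $2$-paths $\xi_i\to x_k\to\xi_j$ with $i\in I_k,j\in J_k$). Rule (2) reverses arrows only at $x_k$, so it does not touch $x_\ell$ except insofar as the arrow $x_k\to x_\ell$ becomes $x_\ell\to x_k$, which is irrelevant to odd–even–odd paths at $x_\ell$. Rule (3) then deletes a pair of opposite $2$-paths $\xi_i\to x_\ell\to\xi_j$ and $\xi_i\leftarrow x_\ell\leftarrow\xi_j$ whenever \emph{both} were created by rule (1); concretely a path $\xi_i\to x_\ell\to\xi_j$ is cancelled precisely when the reversed path $\xi_j\to x_\ell\to\xi_i$ is also present after (1), i.e. when $(i\in I_\ell\cup I_k$, $j\in J_\ell\cup J_k)$ \emph{and} $(j\in I_\ell\cup I_k$, $i\in J_\ell\cup J_k)$, with at least one of the two arising from (1).

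Next I would assemble this into the statement that the set of $2$-paths at $x_\ell$ after $\mu_k$ is the ``symmetric difference at the level of path-sets'': writing $P_{\mathrm{old}}=(I_\ell\times J_\ell)$ for the preexisting paths and $P_{\mathrm{new}}=(I_k\times J_k)$ for those added by (1), rule (3) removes from $P_{\mathrm{old}}\cup P_{\mathrm{new}}$ exactly those $(i,j)$ for which $(j,i)$ also lies in $P_{\mathrm{old}}\cup P_{\mathrm{new}}$ and at least one of $(i,j),(j,i)$ lies in $P_{\mathrm{new}}$. The allowedness condition for the pair $(k,\ell)$ is then the inclusion
$$
(I_k\cup I_\ell)\times(J_k\cup J_\ell)\ \subseteq\ \bigl(P_{\mathrm{old}}\cup P_{\mathrm{new}}\bigr)\setminus(\text{cancelled pairs}).
$$
Now I would analyse when this inclusion holds. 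The left side decomposes into four blocks $I_k\times J_k$, $I_k\times J_\ell$, $I_\ell\times J_k$, $I_\ell\times J_\ell$; the blocks $I_k\times J_k$ and $I_\ell\times J_\ell$ are automatically in $P_{\mathrm{old}}\cup P_{\mathrm{new}}$, so the real constraint is that the ``cross'' blocks $I_k\times J_\ell$ and $I_\ell\times J_k$ be present \emph{and uncancelled}. Chasing the cancellation rule through these cross blocks is where the five alternatives (a)–(e) drop out: (c) $I_k=J_k=\emptyset$ and (e) $I_\ell=J_\ell=\emptyset$ kill the new/old contributions entirely so no cancellation can occur and the cross blocks collapse; (a) $I_k=I_\ell$ and (b) $J_k=J_\ell$ make the cross blocks coincide with the diagonal blocks already present; and (d) $I_k=J_\ell$, $J_k=I_\ell$ is exactly the ``swap'' case where $P_{\mathrm{new}}=I_k\times J_k=J_\ell\times I_\ell$ supplies the reversed copies of $P_{\mathrm{old}}$, so that — one checks — nothing is cancelled and the cross blocks $I_k\times J_\ell=I_k\times I_k$ and $I_\ell\times J_k=J_\ell\times J_\ell$ are covered. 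For the converse I would argue contrapositively: if none of (a)–(e) holds, produce an explicit pair $(\xi_i,\xi_j)$ in one of the cross blocks that is either absent from $P_{\mathrm{old}}\cup P_{\mathrm{new}}$ or is cancelled by rule (3), contradicting allowedness — this uses that failure of (a),(b),(d) gives witnesses $i\in I_k\setminus I_\ell$ (or symmetric) and failure of (c),(e) gives the existence of the relevant vertices to pair them with.

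The main obstacle I anticipate is the precise handling of rule (3): the cancellation is triggered by a pair of \emph{oppositely oriented} $2$-paths through the \emph{same} intermediate vertex $x_\ell$, and one must be careful that it fires only when at least one of the two opposite paths was freshly created by rule (1) (a path already present with its reverse before mutation would have been forbidden by $I_k\cap J_k=\emptyset$ only at $x_k$, not a priori at $x_\ell$). So the bookkeeping has to keep separate ``old'' and ``new'' labels on $2$-paths and apply (3) accordingly; getting the case (d) right — verifying that in the swap situation the newly created reversed paths do \emph{not} get cancelled against the old ones, because cancellation removes \emph{both} members of an opposite pair and would then destroy coverage — is the subtle point, and I would treat it as a separate short lemma with an explicit small example (the one displayed after Definition of mutation) as a sanity check.
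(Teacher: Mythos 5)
A preliminary remark: the paper itself contains no proof of this lemma. Section~2 states explicitly that all concepts and results there are taken from \cite{O1}, and the lemma is quoted without argument, so there is no proof of record to compare your proposal against; it has to stand on its own. Your overall strategy is the natural one and has the right shape: localize at each $x_\ell$ with an arrow $x_k\to x_\ell$, describe the odd--even--odd $2$-paths at $x_\ell$ surviving steps (1)--(3), and translate the coverage requirement $(I_k\cup I_\ell)\times(J_k\cup J_\ell)\subseteq(\text{surviving paths})$ into set identities among $I_k,J_k,I_\ell,J_\ell$.

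The genuine gap is that the decisive step --- the case analysis that is supposed to produce exactly the list (a)--(e) --- is only asserted (``this is where the five alternatives drop out'', ``one checks''), and when one runs it with the surviving-path set you actually wrote down it does not close. Take $I_k=\{1\}$, $I_\ell=\{1,2\}$, $J_k=\{3\}$, $J_\ell=\{3,4\}$: then $P_{\mathrm{old}}\cup P_{\mathrm{new}}=\{1,2\}\times\{3,4\}$ already equals the required set, and your cancellation criterion never fires because $(I_\ell\cap J_k)\cup(I_k\cap J_\ell)=\emptyset$, so your model declares the mutation allowed while none of (a)--(e) holds; your contrapositive argument for necessity cannot produce the promised witness here. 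Symmetrically, sufficiency of (d) fails in your model: with $I_k=J_\ell$ and $J_k=I_\ell$ the required set is $(I_k\cup J_k)\times(I_k\cup J_k)$, which contains pairs $(i,i')$ with $i,i'\in I_k$, and these lie in neither $I_\ell\times J_\ell=J_k\times I_k$ nor $I_k\times J_k$ (as $I_k\cap J_k=\emptyset$), hence are absent from $P_{\mathrm{old}}\cup P_{\mathrm{new}}$. The root cause is that you have not pinned down the semantics of rules (1) and (3): whether adding or removing a $2$-path acts on the path as an independent object or on its constituent arrows (in which case the $2$-paths at $x_\ell$ are automatically the full product of the in- and out-sets of odd arrows, and removing one path deletes arrows and hence other paths). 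The two readings give different surviving sets and different verdicts, and the equivalence with (a)--(e) can hold for at most one of them. Until you fix this bookkeeping at the level of arrows, recompute the sets $I_\ell',J_\ell'$ after mutation, and then carry out the case analysis in full, the middle of your argument is a placeholder sitting exactly where the entire content of the lemma lives.
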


\subsection{The Algebra $A(\widetilde{\mathcal{Q}})$}

Everywhere in this section, we assume that the mutations are allowed.

\begin{Def}
Given an extended quiver $\widetilde{\mathcal{Q}}$,
the mutation $x'_k=\mu_k(x_k)$ of the indeterminate~$x_k$
 is defined by the following formula
\begin{equation}
\label{Mute}
x_k':=
\frac{1}{x_k}
\left(
\prod\limits_{\substack{x_i\to x_k }}x_i
+\prod\limits_{\substack{x_j\leftarrow x_k }}x_j+
\Big(
\sum\limits_{\substack{\xi_i\to{}x_k}}\xi_i
\Big)\Big(
\sum\limits_{\substack{\xi_j\leftarrow{}x_k}}\xi_j\Big)
\prod\limits_{\substack{x_i\to x_k }}x_i
\right),
\end{equation}
that will be called, as in the classical case, an exchange relation.
\end{Def}

Note that,
since the odd coordinates anticommute,
the same holds for the sums:
$$
(\sum\xi_i)(\sum\xi_j)=-(\sum\xi_j)(\sum\xi_i).
$$

\begin{enumerate}
\item
The superalgebra, $A(\widetilde{\mathcal{Q}})$
generated by
the coordinates $\{x_1,\ldots,x_n,\xi_1,\ldots,\xi_m\}$, as well as
all possible mutations of the even coordinates: $x'_1,\ldots,x'_n,x''_1,\ldots$ will be called
the cluster superalgebra associated to the quiver~$\widetilde{\mathcal{Q}}$.
It will be usually considered over $\mathbb{C}$, yet other choices of the ground fields are possible.
\item
The pair $(\widetilde{\mathcal{Q}},\{x_1,\ldots,x_n,\xi_1,\ldots,\xi_m\})$ is called
the initial seed of the cluster superalgebra.
A mutation gives rise to a new seed.

\end{enumerate}

Note that, unlike the classical case, the above mutation of $x_k$ is not an involution.

\section{Compatible Pairs}

Let $\widetilde{\mathcal{Q}}$ be an extended quiver with $n$ even vertices $\mathcal{Q}_0 = \{x_1, x_2, \cdots, x_n\}$ and $m$ odd vertices $\mathcal{Q}_1 = \{x_{n+1}, x_{n+2}, \cdots, x_{n+m}\}$. Assume $l \leq n$ and $\{x_{l+1}, x_{l+2}, \cdots, x_{n+m}\}$ is frozen,  Let $B$ be the $ n \times l$ matrices defined by
$$b_{ij} = \sharp\{x_i \rightarrow x_j\} - \sharp\{x_j \rightarrow x_i\}.$$

\begin{Def}
Let $\widetilde{\mathcal{Q}}$ be an extended quiver with $n$ even vertices and $m$ odd vertices. Let $\Lambda$ be a skew-symmetric $(m + n) \times (m + n)$ integer matrix with rows and columns labeled by $[1 , m + n]$,
$
\Lambda
=\left(
   \begin{array}{cc}
     \Lambda_{11} & \Lambda_{12} \\
     \lambda_{21} & \lambda_{22} \\
   \end{array}
 \right)
$
where $\Lambda_{11}, \Lambda_{12}, \Lambda_{21}, \Lambda_{22}$ are $n \times n, n \times m, m \times n, m \times m$ matrices. We say a pair $(\widetilde{\mathcal{Q}}, \Lambda)$ is compatible pair if it satisfied the following conditions.

{\bf A}.
$D = B^{T}\left(
            \begin{array}{cc}
              \Lambda_{11} & \Lambda_{12} \\
            \end{array}
          \right)
$, $D$ is diagonal matrix with positive integers diagonal entries $d_j$.

{\bf B}.
 If $x_k,x_l \in \mathcal{Q}_1$, and $x_k$ and $x_l$ are connected though a 2-path in $\widetilde{\mathcal{Q}}_2$, then for any $i \neq k, l$, we have $\lambda_{i,k} = - \lambda_{i,l}$.
\end{Def}

We will extend mutations to these of compatible pairs. Fix a sign $\varepsilon \in \{+ 1, - 1\}$, if a mutation of the extended quiver $\widetilde{\mathcal{Q}}$ at a vertex $x_k \in \mathcal{Q}_0$ is allowed, we set
$$\widetilde{\mathcal{Q}}' = \mu_k(\widetilde{\mathcal{Q}}).$$

$E_{\varepsilon}$ is the $n \times n$ matrix with entries

$$e_{ij}
=\left\{\begin{array}{ll}
\delta_{ij}
& \text{if} \ j \neq k; \\[.05in]
-1
& \text{if} \ i = j = k; \\[.05in]
max\{0, - \varepsilon b_{ik}\}
& \text{if} \ i \neq j = k.
 \end{array}   \right.
$$

Now suppose $(\Lambda, \widetilde{\mathcal{Q}})$ is compatible. We set

$$\Lambda' = \left(
               \begin{array}{cc}
                 E_{\varepsilon}^T & 0 \\
                 0 & id \\
               \end{array}
             \right)
\Lambda
 \left(
               \begin{array}{cc}
                 E_{\varepsilon} & 0 \\
                 0 & id \\
               \end{array}
             \right)
$$

Thus, $\Lambda'$ is still skew-symmetric.
\begin{prop}
(1) The pair $(\Lambda',\widetilde{\mathcal{Q}}')$ is compatible.

(2)$\Lambda'$ is independent of choice of a sign $\varepsilon$.
\end{prop}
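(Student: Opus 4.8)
The plan is to mirror the Berenstein--Zelevinsky argument for ordinary quantum cluster algebras, adapting it to the super setting. For part (2), I would first recall that the two choices $\varepsilon = +1$ and $\varepsilon = -1$ are related by $E_{-}= E_{+} \cdot (\text{something})$; more precisely, a direct computation shows that $E_{+}\Lambda E_{+}^{T}$ and $E_{-}\Lambda E_{-}^{T}$ differ (in the classical block) by a matrix built from $B^{T}\Lambda_{11}$, which by compatibility condition {\bf A} is the diagonal matrix $D$. Concretely, writing $E_{+} - E_{-}$ as a rank-one update supported in column $k$ equal to $\pm b_{\cdot k}$, one expands $\Lambda' = E_{\varepsilon}^{T}\Lambda E_{\varepsilon}$ and checks that all $\varepsilon$-dependent terms cancel because $\sum_i b_{ik}\lambda_{ij}= \pm d_k\delta_{kj}$ on the nose. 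The odd block is untouched (the conjugating matrix is the identity there), so nothing to check there. I expect this to be a short but slightly fiddly index computation; it is exactly the super-analogue of \cite[Proposition 3.4]{BZ1}, and the odd rows/columns play no role in it.

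For part (1), I would verify the two compatibility conditions {\bf A} and {\bf B} for $(\Lambda', \widetilde{\mathcal{Q}}')$ separately. For condition {\bf A}: let $B'$ be the exchange matrix of $\mu_k(\widetilde{\mathcal{Q}})$. The classical mutation rule for $B$ can be written $B' = E_{\varepsilon} B F_{\varepsilon}$ for a suitable $l\times l$ matrix $F_{\varepsilon}$ (again as in \cite{BZ1}), and then one computes
\[
(B')^{T}\bigl(\Lambda'_{11}\ \ \Lambda'_{12}\bigr)
= F_{\varepsilon}^{T} B^{T} E_{\varepsilon}^{T}\,
\left(\begin{array}{cc} E_{\varepsilon} & 0\\ 0 & id\end{array}\right)^{T}\!\!\Lambda\!\left(\begin{array}{cc} E_{\varepsilon} & 0\\ 0 & id\end{array}\right)
\]
and collapses it using $E_{\varepsilon}E_{\varepsilon} = id$ (one must be slightly careful: $E_\varepsilon^2 = id$ only after accounting for the sign row, but the standard identity $E_\varepsilon B = $ (a matrix whose $k$-th row is corrected) still gives $B^{T}E_\varepsilon^{T}(E_\varepsilon\,\cdot) = B^{T}(\cdot)$ up to the diagonal correction absorbed by $F_\varepsilon$), to conclude $(B')^{T}(\Lambda'_{11}\ \Lambda'_{12}) = F_\varepsilon^{T} D F_\varepsilon$, which is again diagonal with positive entries since $F_\varepsilon$ is (in the classical case) a signed permutation-type matrix that conjugates a positive diagonal matrix to a positive diagonal matrix. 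I would invoke the classical result here rather than reprove it.

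The genuinely new content is condition {\bf B}, which has no classical counterpart: I must check that if $x_a, x_b \in \mathcal{Q}_1$ (odd, hence frozen) are joined by a $2$-path in $\widetilde{\mathcal{Q}}'_2$, then $\lambda'_{i,a} = -\lambda'_{i,b}$ for all $i \neq a,b$. Since the odd block of the conjugation is the identity, $\lambda'_{i,a} = \lambda_{i,a}$ when $i$ is odd or is an even vertex $\neq k$, so for those $i$ the claim follows from condition {\bf B} for $(\Lambda, \widetilde{\mathcal{Q}})$ provided $x_a, x_b$ were already joined by a $2$-path in $\widetilde{\mathcal{Q}}_2$, or from the mutation rules (1) and (3) of the quiver mutation, which are precisely designed so that the set of odd-vertex pairs joined by $2$-paths through some even vertex is preserved/controlled. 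The only index needing real work is $i = k$: here $\lambda'_{k,a} = \sum_j e_{jk}\lambda_{j,a} = -\lambda_{k,a} + \sum_{j\neq k}\max\{0,-\varepsilon b_{jk}\}\lambda_{j,a}$, and similarly for $b$; using $\lambda_{j,a} = -\lambda_{j,b}$ for all $j \neq a,b$ (including $j = k$, and noting $a,b$ are frozen so $b_{a,k}, b_{b,k}$ do not occur) gives $\lambda'_{k,a} = -\lambda'_{k,b}$ termwise. The main obstacle, and the step I would spend the most care on, is the bookkeeping of \emph{which} pairs of odd vertices become $2$-path-connected in $\mu_k(\widetilde{\mathcal{Q}})$ versus in $\widetilde{\mathcal{Q}}$ --- i.e.\ matching the combinatorics of mutation rules (1)--(3) for $\widetilde{\mathcal{Q}}_2$ against the ``allowed mutation'' hypothesis and the Lemma characterizing it, so that condition {\bf B} for the source pair is always available when needed. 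Everything else reduces to the classical computations of \cite{BZ1} applied blockwise.
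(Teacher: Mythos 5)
Your overall strategy is the same as the paper's: write the mutated exchange matrix as $B'=E_{\varepsilon}BF_{\varepsilon}$, conjugate $\Lambda$ by $\left(\begin{smallmatrix}E_{\varepsilon}&0\\0&id\end{smallmatrix}\right)$, verify condition {\bf A} by a matrix identity, and read off condition {\bf B} and the sign-independence from the explicit entries of row $k$. However, two steps as you have written them would not go through. In condition {\bf A} you collapse to $(B')^{T}(\Lambda'_{11}\ \Lambda'_{12})=F_{\varepsilon}^{T}DF_{\varepsilon}$ and declare this diagonal because $F_{\varepsilon}$ is ``a signed permutation-type matrix.'' Neither claim is correct: $F_{\varepsilon}$ is an elementary matrix whose $k$-th row carries the entries $\max\{0,\varepsilon b_{kj}\}$, so a congruence by it does not preserve diagonality, and in any case the correct collapse leaves $F_{\varepsilon}^{T}D_{11}E_{\varepsilon}$ in the first block (only one of the two $E_{\varepsilon}^{T}$'s coming from $\Lambda'_{11}=E_{\varepsilon}^{T}\Lambda_{11}E_{\varepsilon}$ is cancelled by $(B')^{T}$). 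The missing ingredient is the identity $F_{\varepsilon}^{T}D_{11}=D_{11}E_{\varepsilon}$, which together with $E_{\varepsilon}^{2}=1$ gives $F_{\varepsilon}^{T}D_{11}E_{\varepsilon}=D_{11}$; this is exactly the identity the paper records and uses.

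Second, your remark that the odd block is untouched ``so nothing to check there'' is inaccurate for the mixed block: $\Lambda'_{12}=E_{\varepsilon}^{T}\Lambda_{12}$, so row $k$ of the even--odd block does change and does, a priori, depend on $\varepsilon$. The rescue is that condition {\bf A} here reads $B^{T}(\Lambda_{11}\ \Lambda_{12})=D$ with $D$ diagonal, which forces $B^{T}\Lambda_{12}=0$; hence $\sum_{i}b_{ik}\lambda_{ij}=0$ also for odd $j$, and your cancellation argument extends verbatim to those columns --- but this must be said, since both sign-independence and condition {\bf B} live precisely in that block. With these two corrections your argument coincides with the paper's. Finally, the bookkeeping you flag as the main obstacle for condition {\bf B} --- determining which odd pairs are $2$-path-connected in $\mu_{k}(\widetilde{\mathcal{Q}})$ but not in $\widetilde{\mathcal{Q}}$ --- is a real issue that the paper's own proof elides with ``automatically satisfies the condition,'' so your caution there is warranted; a complete proof would still have to carry out that case analysis using the lemma characterizing allowed mutations.
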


\begin{proof}
 Let $F_{\varepsilon}$ is the $l \times l$ matrix with entries

$$f_{ij}
=\left\{\begin{array}{ll}
\delta_{ij}
& \text{if} \ i \neq k; \\[.05in]
-1
& \text{if} \ i = j = k; \\[.05in]
max\{0, \varepsilon b_{kj}\}
& \text{if} \ i = k \neq j.
 \end{array}   \right.
$$
By some direct computation, we have

$$E_{\varepsilon}^2 = 1,\ F_{\varepsilon}^2 = 1,\ F_{\varepsilon}^{T}D_{11} = D_{11} E_{\varepsilon},$$
where $D=\left(
           \begin{array}{cc}
             D_{11} & D_{12} \\
           \end{array}
         \right)
$, $D_{11}$ is the $n \times n$ matrix.

Then we need to show $(\Lambda',\widetilde{\mathcal{Q}}')$ satisfies the two conditions of compatible pairs.

\begin{eqnarray*}
 &&(B')^{T}(\Lambda_{11}' \ \Lambda_{12}') \\
 & = & (E_{\varepsilon}BF_{\varepsilon})^{T}(E_{\varepsilon}^{T}\Lambda_{11}E_{\varepsilon} \ E_{\varepsilon}^{T}\Lambda_{12}) \\
& = & F_{\varepsilon}B^{T}(\Lambda_{11}E_{\varepsilon} \  \Lambda_{12})\\
& = &  (F_{\varepsilon}^{T}D_{11}E_{\varepsilon} \  0)\\
& = & (D_{11} \ 0).
\end{eqnarray*}

That is, $(\Lambda',\widetilde{\mathcal{Q}}')$ satisfies the first condition.
Since $\Lambda_{12}' = E_{\varepsilon}^{T}\Lambda_{12}$. If $1 \leq i \leq n, n \leq j \leq n+m,$

$$\lambda_{ij}'
=\left\{\begin{array}{ll}
\lambda_{ij}
& \text{if} \ i \neq k; \\[.05in]
\sum\limits_{i \in \{1 \leq i \leq n | i \neq k, \varepsilon b_{ik} > 0\}}\varepsilon b_{ik}\lambda_{ik}
& \text{if} \ i = k.
 \end{array}   \right.
$$

$\Lambda$ satisfies the condition of compatible pair. Therefore, $\Lambda'$ automatically satisfies the condition. The first part of the proposition follows.
And the second part of the proposition follows directly from the formula of $\Lambda_{12}'$.
\end{proof}

\begin{Def}
Let $(\Lambda,\widetilde{\mathcal{Q}})$ be a compatible pair. If a mutation of $\widetilde{\mathcal{Q}}$ at a vertex $x_k \in \widetilde{\mathcal{Q}}_0$ is allowed, we say $(\Lambda',\widetilde{\mathcal{Q}}')$ is obtained from $(\Lambda,\widetilde{\mathcal{Q}})$ by the mutation at vertex $x_k$, and write $(\Lambda',\widetilde{\mathcal{Q}}')= \mu_k(\Lambda,\widetilde{\mathcal{Q}}).$
\end{Def}

\begin{prop}
The mutation of $\Lambda$ is involutive: for any compatible pair $(\Lambda, \widetilde{\mathcal{Q}})$, we have $\mu_k(\mu_k(\Lambda)) = \Lambda$.
\end{prop}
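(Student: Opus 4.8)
The plan is to reduce the claim to the single matrix identity $E_{\varepsilon}^{2}=1$, which was already verified in the proof of the previous proposition, together with the fact (part (2) above) that the mutation of a compatible pair does not depend on the auxiliary sign. Write $P_{\varepsilon}$ for the block-diagonal $(m+n)\times(m+n)$ matrix whose upper-left $n\times n$ block is $E_{\varepsilon}$ and whose lower-right $m\times m$ block is the identity, so that by definition $\Lambda'=P_{\varepsilon}^{T}\Lambda P_{\varepsilon}$. Since $E_{\varepsilon}^{2}=1$ we get $P_{\varepsilon}^{2}=1$, and therefore $(P_{\varepsilon}^{T})^{2}=(P_{\varepsilon}^{2})^{T}=1$ as well.

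Next I would pin down the data of the second mutation. Because every quiver mutation $\mu_{k}$ of $\widetilde{\mathcal{Q}}$ is an involution, applying $\mu_{k}$ to $\widetilde{\mathcal{Q}}'=\mu_{k}(\widetilde{\mathcal{Q}})$ recovers $\widetilde{\mathcal{Q}}$; in particular the exchange matrix of $\widetilde{\mathcal{Q}}'$ is $B'=\mu_{k}(B)$, and its $k$-th column satisfies $b'_{ik}=-b_{ik}$ for all $i$, since rule (2) of the mutation reverses every arrow incident to $x_{k}$ and the entries $b_{ik}$ with $1\le i\le n$ are controlled only by the classical part (rule (0)) of the mutation. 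The key point is then that the matrix attached to $(\widetilde{\mathcal{Q}}',k)$ with sign $-\varepsilon$ equals the matrix $E_{\varepsilon}$ attached to $(\widetilde{\mathcal{Q}},k)$: for $i\ne k$ its off-diagonal entry in column $k$ is $\max\{0,-(-\varepsilon)b'_{ik}\}=\max\{0,\varepsilon(-b_{ik})\}=\max\{0,-\varepsilon b_{ik}\}$, while the diagonal entries and the $(k,k)$-entry $-1$ are insensitive to the sign. This is the super-analogue of the corresponding computation of Berenstein--Zelevinsky.

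Assembling these facts, I would compute $\mu_{k}(\Lambda',\widetilde{\mathcal{Q}}')$ using the sign $-\varepsilon$, which is legitimate by part (2) of the preceding proposition. The resulting quasi-commutation matrix is
\[
\Lambda''=P_{\varepsilon}^{T}\Lambda' P_{\varepsilon}=P_{\varepsilon}^{T}\bigl(P_{\varepsilon}^{T}\Lambda P_{\varepsilon}\bigr)P_{\varepsilon}=(P_{\varepsilon}^{T})^{2}\,\Lambda\,P_{\varepsilon}^{2}=\Lambda,
\]
and the resulting extended quiver is $\mu_{k}(\widetilde{\mathcal{Q}}')=\widetilde{\mathcal{Q}}$; hence $\mu_{k}(\mu_{k}(\Lambda,\widetilde{\mathcal{Q}}))=(\Lambda,\widetilde{\mathcal{Q}})$, which is the assertion.

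The step I expect to require the most care is the identification of the two $E$-matrices, i.e. the equality $b'_{ik}=-b_{ik}$ for $1\le i\le n$: one has to make sure that the extra mutation rules (1) and (3) for the extended quiver — which can create or delete arrows between even and odd vertices and among odd vertices — never touch the $n\times n$ block $B$ in the column indexed by the mutated vertex, so that the ordinary sign-reversal of column $k$ holds exactly on the nose. Once that is checked, the rest of the argument is purely formal manipulation of the block matrices.
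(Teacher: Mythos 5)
Your proof is correct and follows essentially the same route as the paper's: both reduce the claim to the observation that the $k$-th column of $B'$ is the negative of the $k$-th column of $B$, identify the $E$-matrix of the mutated data with the original $E_{\varepsilon}$ (using sign-independence of $\Lambda'$), and conclude from $E_{\varepsilon}^{2}=1$. If anything, your statement $E'_{-\varepsilon}=E_{\varepsilon}$ is the precise form of what the paper sloppily records as $E_{\varepsilon}'=-E_{\varepsilon}$, so your write-up is the more careful of the two.
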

\begin{proof}
Let $(\Lambda', \widetilde{\mathcal{Q}}') = \mu_k((\Lambda, \widetilde{\mathcal{Q}}))$, we have $B' = E_{\varepsilon}BF_{\varepsilon}$. The k-th column of $B'$ is the negative of the k-th column of $B$. It follows that $E_{\varepsilon}' = - E_{\varepsilon}$.
\begin{eqnarray*}
 &&\mu_k(\mu_k(\Lambda)) \\
 & = & \left(
         \begin{array}{cc}
           E_{\varepsilon}' & 0 \\
           0 & id \\
         \end{array}
       \right)
 \left(
   \begin{array}{cc}
     E_{\varepsilon} & 0 \\
     0 & id \\
   \end{array}
 \right)
 \Lambda
  \left(
   \begin{array}{cc}
     E_{\varepsilon} & 0 \\
     0 & id \\
   \end{array}
 \right)
 \left(
         \begin{array}{cc}
           E_{\varepsilon}' & 0 \\
           0 & id \\
         \end{array}
       \right)
  \\
& = & \Lambda.
\end{eqnarray*}
\end{proof}

\section{Quantum Cluster Superalgebra Setup}
\subsection{Based Quantum Supertorus}

Let $L$ be a $\mathbb{Z}_2$-grading lattice of rank $n + m$, with a skew-symmetric bilinear form $\Lambda: L \times L \rightarrow \mathbb{Z}$. Let $\{e_1, e_2, \cdots,e_{n+m}\}$ be a  basis of L .
$$deg(e_{i})
=\left\{\begin{array}{ll}
0
& \text{if} \ 1\leq i \leq n; \\[.05in]
1
& \text{if}  \ i \geq n.
 \end{array}   \right.
$$
Let $q$ be a formal variable, and $\mathbb{Z}[q^{\pm\frac{1}{2}}] \subset \mathbb{Q}(q^{\frac{1}{2}})$ denote the ring of integer Laurent polynomials in the variable $q^{\frac{1}{2}}$.

\begin{Def}
The based quantum supertorus associated with $L$ is $\mathbb{Z}[q^{\frac{1}{2}}]$-algebra $\mathcal{T} = \mathcal{T}(\Lambda)$ with a distinguished $\mathbb{Z}[q^{\frac{1}{2}}]$-basis $\mathfrak{s} = \{X^e | e \in L $ such that $ a_i \in \{0,1\} \ \forall \ i > n\}$ and multiplication

$$X^eX^f = (- 1)^{\tau(e,f)}q^{\frac{\Lambda(e,f)}{2}}X^{e + f}, (e,f \in L),$$
where $X^{e + f} = 0$ if $(e + f)_j \geq 2$ for some $j > n$.

$$\tau(e,f) = \sharp \{(j_1,j_2) | j_1 > j_2 , e^{(1)}_{j_1} \neq 0, f^{(1)}_{j_2} \neq 0 \}.$$
\end{Def}

\begin{prop}
(1). $\mathcal{T}$ is associative algebra: we have

$$(X^eX^f)X^g = X^e(X^fX^g).$$

(2). The basis elements satisfy the commutation relations:

$$X^eX^f = (-1)^{\tau(e,f) + \tau(f,e)}q^{\Lambda(e,f)}X^fX^e.$$

(3). $\mathcal{T}$ is superalgebra generated by the even elements $\{X^{e_i} | 1 \leq i \leq n \}$ and the odd elements $\{X^{e_i} | i > n\}$ satisfied the following relations.

\begin{eqnarray*}
  (R1)&& X^{e_i}X^{e_j} = q^{\lambda_{ij}}X^{e_j}X^{e_i} , ~\rm{if} \  i \leq n \ \rm{or} \ j \leq n ;\\
  (R2)&& X^{e_i}X^{e_j} = -q^{\lambda_{ij}}X^{e_j}X^{e_i}, ~\rm{if} \  i > n \ \rm{and} \ j > n;\\
  (R3)&& (X^{e_i})^2 = 0, ~\rm{if} \  i > n.
\end{eqnarray*}

\end{prop}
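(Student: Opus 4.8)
$$X^eX^f = (-1)^{\tau(e,f)}q^{\Lambda(e,f)/2}X^{e+f},$$
keeping careful track of the truncation convention $X^{e+f}=0$ whenever $(e+f)_j\geq 2$ for some $j>n$, and of the behaviour of $\tau$ under concatenation of the odd supports. Throughout, for $e\in L$ I write $e^{(1)}$ for the tuple of odd coordinates $(e_{n+1},\dots,e_{n+m})$, so that $\tau(e,f)$ counts inversions between the supports of $e^{(1)}$ and $f^{(1)}$.

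For (1), associativity, I would first dispose of the degenerate cases: if any of $e+f$, $f+g$, or $e+f+g$ has an odd coordinate $\geq 2$, then I must check that \emph{both} sides vanish. The only subtle point is when, say, $(e+f)_j\geq 2$ (so the left side is manifestly $0$) but one wants to see the right side $X^e(X^fX^g)$ also vanishes; here one uses that $a_i\in\{0,1\}$ for basis elements forces $e_j=f_j=1$, hence $(e+f+g)_j\geq 2$ as well, so $X^{e+f+g}=0$ and the right side is $0$ too. Assuming no truncation occurs, both sides equal $(-1)^{N}q^{M}X^{e+f+g}$ where the $q$-exponent $M=\tfrac12(\Lambda(e,f)+\Lambda(e+f,g)) = \tfrac12(\Lambda(e,f)+\Lambda(f,g)+\Lambda(e,g))$ agrees on both sides by bilinearity, and the sign exponent is $N=\tau(e,f)+\tau(e+f,g)$ versus $N'=\tau(f,g)+\tau(e,f+g)$; the key combinatorial identity is $\tau(e+f,g)\equiv\tau(e,g)+\tau(f,g)$ and $\tau(e,f+g)\equiv\tau(e,f)+\tau(e,g)\pmod 2$, which holds because an inversion of $(e+f)^{(1)}$ against $g^{(1)}$ is an inversion either of $e^{(1)}$ or of $f^{(1)}$ against $g^{(1)}$ (there is no interaction, since the count is over pairs $(j_1,j_2)$ with $j_1>j_2$ and the first index ranges over the support of the first argument). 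Both $N$ and $N'$ then reduce to $\tau(e,f)+\tau(e,g)+\tau(f,g)\pmod 2$.

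For (2), I compute $X^eX^f$ and $X^fX^e$ from the rule and divide: $X^eX^f = (-1)^{\tau(e,f)}q^{\Lambda(e,f)/2}X^{e+f}$ and $X^fX^e = (-1)^{\tau(f,e)}q^{\Lambda(f,e)/2}X^{f+e} = (-1)^{\tau(f,e)}q^{-\Lambda(e,f)/2}X^{e+f}$ using skew-symmetry of $\Lambda$. Hence $X^eX^f = (-1)^{\tau(e,f)-\tau(f,e)}q^{\Lambda(e,f)}X^fX^e$, and since $(-1)^{-1}=(-1)$ this is exactly the stated relation $X^eX^f=(-1)^{\tau(e,f)+\tau(f,e)}q^{\Lambda(e,f)}X^fX^e$. (If $X^{e+f}=0$ both sides are $0$ and the relation is trivially true.) For (3): that the $X^{e_i}$ generate $\mathcal T$ follows by iterating the multiplication rule to write any basis monomial $X^e = $ (scalar)$\cdot X^{e_1}\cdots X^{e_{n+m}}$-type product in the appropriate order. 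Relations (R1)--(R3) are special cases of (2): when $i\leq n$ or $j\leq n$, at least one of $e_i,e_j$ has empty odd support, so $\tau(e_i,e_j)=\tau(e_j,e_i)=0$ and the sign is $+1$, giving (R1) with $\lambda_{ij}=\Lambda(e_i,e_j)$; when $i,j>n$ with $i\neq j$, exactly one of $\tau(e_i,e_j),\tau(e_j,e_i)$ equals $1$ and the other $0$, so the total sign exponent is odd, giving (R2); and (R3) is immediate from the truncation convention, since $(e_i+e_i)_i=2$.

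**The main obstacle** is the sign bookkeeping in part (1): one must be scrupulous that $\tau$ is additive in each argument modulo $2$ under the decomposition $e+f$, and that this additivity is exactly what makes the two bracketings of the triple product agree. The $q$-exponent is routine bilinearity; the truncation cases are a finite check; but the $\mathbb Z_2$-grading sign is where a careless computation would go wrong, so I would state and prove the lemma $\tau(e+f,g)\equiv\tau(e,g)+\tau(f,g)$ and $\tau(e,f+g)\equiv\tau(e,f)+\tau(e,g)\pmod 2$ explicitly before assembling the associativity identity.
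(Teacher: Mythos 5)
Your proposal follows the same route as the paper's proof: direct computation of both bracketings from the multiplication rule, reduction of the sign question to the identity $\tau(e,f)+\tau(e+f,g)=\tau(f,g)+\tau(e,f+g)$ (which the paper declares ``obvious'' and you justify by additivity of $\tau$ in each argument), comparison of $X^eX^f$ with $X^fX^e$ for part (2), and derivation of (R1)--(R3) as special cases of (2) plus the truncation convention. Your explicit handling of the degenerate cases $X^{e+f}=0$ and the stated $\tau$-additivity lemma merely fill in details the paper leaves to the reader; the argument is otherwise the same.
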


\begin{proof}
Since
\begin{eqnarray*}
 (X^eX^f)X^g & = & (-1)^{\tau(e,f)}q^{\frac{\Lambda(e,f)}{2}}X^{e + f}X^g\\
& = & (-1)^{\tau(e,f) + \tau(e+f,g)}q^{\frac{\Lambda(e,f)}{2}}q^{\frac{\Lambda(e + f,g)}{2}}X^{e+f+g}\\
& = &  (-1)^{\tau(e,f) + \tau(e+f,g)}q^{\frac{\Lambda(e,f) + \Lambda(f,g) + \Lambda(e,g)}{2}}X^{e+f+g};
\end{eqnarray*}

\begin{eqnarray*}
 X^e(X^fX^g) & = & (-1)^{\tau(f,g)}q^{\frac{\Lambda(f,g)}{2}}X^{e}X^{f + g}\\
& = & (-1)^{\tau(f,g) + \tau(e,f+g)}q^{\frac{\Lambda(f,g)}{2}}q^{\frac{\Lambda(e,f+g)}{2}}X^{e+f+g}\\
& = &  (-1)^{\tau(f,g) + \tau(e,f + g)}q^{\frac{\Lambda(e,f) + \Lambda(f,g) + \Lambda(e,g)}{2}}X^{e+f+g};
\end{eqnarray*}

It is obvious that $\tau(e,f) + \tau(e+f,g) = \tau(f,g) + \tau(e,f + g)$. Thus, we have proved the first part of the proposition.

Secondly,
$$X^eX^f = (-1)^{\tau(e,f)}q^{\frac{\Lambda(e,f)}{2}}X^{e + f};$$
$$X^fX^e = (-1)^{\tau(f,e)}q^{\frac{\Lambda(f,e)}{2}}X^{e + f}.$$

Therefore, $X^eX^f = (-1)^{\tau(e,f) + \tau(f,e)}X^fX^e.$

At last, we will prove the last part of the proposition. If $e = \sum\limits_{i = 1}^{n + m}a_ie_i$,

$$X^e = q^{\frac{1}{2}\sum\limits_{l < k}a_ka_l\lambda_{kl}}(x^{e_1})^{a_1}\cdots (x^{e_{n + m}})^{a_{n}}.$$

Thus, $\mathcal{T}$ is generated by the element $\{X^{e_i} | 1 \leq i \leq n + m \}$. By some direct computations, the generated relation is obtained.
\end{proof}

Set $\mathfrak{s}_1 = \{X^e | e \in L $ such that $ \forall  \ i > n, a_i = 0 \} \subset \mathfrak{s}$, and  $S = \{t \in \mathcal{T} | t = t_1 + t_2, t_1 \in \mathcal{T}_1,  t_2 \in \mathcal{T}_2, t_1 \neq 0\}$, where $\mathcal{T}_1 $ is a vector space with a distinguished $\mathbb{Z}[q^{\frac{1}{2}}]$-basis $\mathfrak{s}_1$, $\mathcal{T}_2 $ is a vector space with a distinguished $\mathbb{Z}[q^{\frac{1}{2}}]$-basis $\mathfrak{s} \setminus \mathfrak{s}_1$. Set $\mathcal{F} = S^{-1}\mathcal{T}$. $\mathcal{F}$ is a $\mathbb{Q}[q^{\frac{1}{2}}]$-algebra. A quantum cluster superalgebra we defined below is the $\mathbb{Z}[q^{\frac{1}{2}}]$-subalgebra of  $\mathcal{F}$.

\begin{Def}
A supertoric frame in $\mathcal{F}$ is a mapping $M: \mathbb{Z}^{n|m} \rightarrow \mathcal{F} - \{0\}, ~c \mapsto \varphi(X^{\eta(c)})$, where $\varphi$ is automorphism of superalgebra $\mathcal{F}$, and $\eta: \mathbb{Z}^{n|m} \rightarrow L$ is an isomorphism of super-lattice.
\end{Def}

Let $\Lambda_M$ be the bilinear form on  $\mathbb{Z}^{n|m}$ obtained by transferring the form $\Lambda$ from $L$ by $\eta$, and $\{e_1, e_2, \cdots,e_{n+m}\}$ be a standard basis of   $\mathbb{Z}^{n|m}$. The multiplication is given by

$$M(c)M(d) = (-1)^{\tau(c,d)}q^{\frac{\lambda_M(c,d)}{2}}M(c+d).$$

Similarly as above, we know the supertoric frame $M$ is uniquely determined by $X_i = M(e_i)$ for $i \in [1, m+n]$.

 \subsection{Quantum Super-Seed}

\begin{Def}
A quantum super-seed is a pair$(M,\widetilde{\mathcal{Q}})$, where

(1) M is a super-toric frame in $\mathcal{F}$.

(2)$\widetilde{\mathcal{Q}}$ is an extended quiver with $n$ even vertices and $m$ odd vertices.

(3)$(\Lambda_M,\widetilde{\mathcal{Q}})$ is compatible.

\end{Def}

Let $(M,\widetilde{\mathcal{Q}})$ be a quantum super-seed. If the mutation of $\widetilde{\mathcal{Q}}$ at vertex $x_k$ is allowed, we define $M' : \mathbb{Z}^{n|m} \rightarrow  \mathcal{F} - \{0\}$  by

$$M'(e_i)
=\left\{\begin{array}{ll}
M(e_i)
& \text{if} \ i \neq k; \\[.05in]
M(- e_k + \sum\limits_{b_{ik} > 0}b_{ik}e_i) + M(- e_k - \sum\limits_{b_{ik} < 0}b_{ik}e_i) \\+ \sum\limits_{ x_i \rightarrow x_k \rightarrow x_j \in \widetilde{\mathcal{Q}}_2}(-1)^{\tau (e_{i},e_{j})}M(- e_k + \sum\limits_{b_{ik} > 0}b_{ik}e_i + e_{i} + e_{j} )
& \text{if}  \ i = k,
 \end{array}   \right.
$$

and $\Lambda' = \left(
               \begin{array}{cc}
                 E_{\varepsilon} & 0 \\
                 0 & id \\
               \end{array}
             \right)
\Lambda
 \left(
               \begin{array}{cc}
                 E_{\varepsilon}^T & 0 \\
                 0 & id \\
               \end{array}
             \right)
: \mathbb{Z}^{n|m} \times \mathbb{Z}^{n|m} \rightarrow \mathbb{Z}$.

\begin{prop}
(1) The $M'$ is a super-toric frame.

(2)$(\Lambda_M',\mu_k(\widetilde{\mathcal{Q}}))$ is obtained by $(\Lambda_M,\widetilde{\mathcal{Q}})$ at the vertex $x_k$.

(3)The pair $(M',\mu_k(\widetilde{\mathcal{Q}}))$ is quantum super-seed.
\end{prop}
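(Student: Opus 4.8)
The plan is to verify the three assertions in order, treating (2) as the bookkeeping core, (1) as a consequence of the verification that $M'$ lands in $\mathcal F\setminus\{0\}$ and respects the quasi-commutation, and (3) as the assembly of (1) and (2) together with the compatibility already proved in Proposition (the statement ``$(\Lambda',\widetilde{\mathcal Q}')$ is compatible'').

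First I would prove (2). By definition $\Lambda_M$ is the form $\Lambda$ transported to $\mathbb Z^{n|m}$ via $\eta$, and $\Lambda_M'$ is obtained by conjugating $\Lambda_M$ with $\mathrm{diag}(E_\varepsilon,\mathrm{id})$; this is exactly the matrix formula defining the mutation $\mu_k$ of compatible pairs in Section 3 (up to the transpose placement, which matches because $E_\varepsilon^T=E_\varepsilon^{-1}=E_\varepsilon$ by $E_\varepsilon^2=1$). So I would check that the exchange matrix $B'$ read off from $\mu_k(\widetilde{\mathcal Q})$ equals $E_\varepsilon B F_\varepsilon$ — this is the classical-quiver-mutation part, rule (0) of the extended-quiver mutation, which is unchanged from the non-super setting — and then cite Proposition on compatible pairs to conclude $(\Lambda_M',\mu_k(\widetilde{\mathcal Q}))=\mu_k(\Lambda_M,\widetilde{\mathcal Q})$ as a compatible pair. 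The sign-independence of $\Lambda'$ is already in that earlier Proposition.

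Next, for (1), I must show $M'$ is a supertoric frame, i.e. that $M'=\varphi'\circ X^{\eta'(-)}$ for some superalgebra automorphism $\varphi'$ of $\mathcal F$ and some super-lattice isomorphism $\eta':\mathbb Z^{n|m}\to L$. The natural choice is $\eta'=\eta\circ(\text{the lattice automorphism }c\mapsto E_\varepsilon c\text{ on the even part, identity on the odd part})$, which is an isomorphism since $E_\varepsilon^2=1$; then $M'(e_i)=M(e_i)$ for $i\neq k$ forces $\varphi'$ to agree with $\varphi$ on all generators except the $k$-th, and one defines $\varphi'$ on $X^{e_k}$ by the exchange expression. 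The content is: (i) $M'(e_k)\neq 0$ in $\mathcal F$ — it is a nonzero element of the localization $S^{-1}\mathcal T$ because its numerator has a nonzero ``$\mathcal T_1$-component'' (the even monomial $M(-e_k+\sum_{b_{ik}>0}b_{ik}e_i)$), so it lies in $S$ and hence is a unit, in particular nonzero; (ii) the assignment extends to a well-defined superalgebra homomorphism $\mathcal F\to\mathcal F$, which requires checking that $M'(e_k)$ quasi-commutes with the $M'(e_i)=M(e_i)$ with precisely the exponents prescribed by $\Lambda_M'$, and that it is super-central/nilpotent-free as forced by relations (R1)--(R3); (iii) it is invertible, with inverse built from the reverse mutation, using Proposition on involutivity of $\mu_k(\Lambda)$ and the involutivity of the extended-quiver mutation noted after its definition.

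The main obstacle is step (ii) of the previous paragraph: verifying that $M'(e_k)$ has the correct quasi-commutation relations with each $M(e_i)$. This is where the compatibility conditions {\bf A} and {\bf B} of a compatible pair, and in particular condition {\bf B} governing the odd vertices joined by a $2$-path, must be used to control the cross terms coming from the sum $\sum_{x_i\to x_k\to x_j}(-1)^{\tau(e_i,e_j)}M(-e_k+\sum b_{ik}e_i+e_i+e_j)$. Concretely, one expands $M'(e_k)M(e_l)$ and $M(e_l)M'(e_k)$ using the multiplication rule $M(c)M(d)=(-1)^{\tau(c,d)}q^{\lambda_M(c,d)/2}M(c+d)$, and must see that every monomial picks up the same scalar $(-1)^{?}q^{\lambda'_{kl}}$; the even monomials behave as in Berenstein--Zelevinsky's quantum case (so the $q$-powers work out from $D=B^T\Lambda$ being diagonal), while the new odd monomials produce extra $\tau$-signs that cancel exactly because condition {\bf B} gives $\lambda_{i,k}=-\lambda_{i,l}$ for the two odd endpoints of a $2$-path and because $(X^{e_i})^2=0$ kills the degenerate terms. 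Once (1) and (2) are in hand, (3) is immediate: $M'$ is a supertoric frame by (1), $\mu_k(\widetilde{\mathcal Q})$ is again an extended quiver with the same numbers of even and odd vertices, and $(\Lambda_{M'},\mu_k(\widetilde{\mathcal Q}))=(\Lambda_M',\mu_k(\widetilde{\mathcal Q}))$ is compatible by (2), so all three defining conditions of a quantum super-seed hold.
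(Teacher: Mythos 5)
Your proposal is correct and its essential content coincides with the paper's proof: the paper's entire argument for this proposition is the single quasi-commutation check $M'(e_i)M'(e_k)=(-1)^{\tau(e_i,e_k)}q^{\lambda'_{ik}}M'(e_k)M'(e_i)$ for $i\neq k$, which is exactly your step (ii), relying as you say on condition {\bf A} for the $q$-exponents and condition {\bf B} to make the odd $2$-path terms quasi-commute uniformly. The additional scaffolding you supply --- the identification of $\eta'=\eta\circ(E_\varepsilon\oplus\mathrm{id})$, the non-vanishing of $M'(e_k)$ via its even component lying in $S$, and the assembly of (2) and (3) from the earlier proposition on compatible pairs --- is all consistent with the paper's definitions and simply fills in detail the paper leaves implicit.
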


\begin{proof}
We need to compute the following commutation relation:
for any $i \neq k,$
\begin{eqnarray*}
&&M'(e_i)M'(e_k)\\
& = & M(e_i)[M(- e_k + \sum\limits_{b_{ik} > 0}b_{ik}e_i) + M(- e_k - \sum\limits_{b_{ik} < 0}b_{ik}e_i) \\
&&+ \sum\limits_{ x_i \rightarrow x_k \rightarrow x_j \in \widetilde{\mathcal{Q}}_2}(-1)^{\tau (e_{i},e_{j})}M(- e_k + \sum\limits_{b_{ik} > 0}b_{ik}e_i + e_{i} + e_{j} )]\\
& = &   (-1)^{\tau(e_i,e_k)}q^{\lambda'_{ik}}M'(e_k)M'(e_i).
\end{eqnarray*}

\end{proof}
By the above proposition, the following corollary can be obtained directly.
\begin{cor}
Let $(M,\widetilde{\mathcal{Q}})$ be a quantum super-seed. Suppose the quantum seed $(M',\mu_k(\widetilde{\mathcal{Q}}))$ is obtained from $(M,\widetilde{\mathcal{Q}})$ by the mutation in the direction k. Let $X_i = M(e_i), \ X_i' = M'(e_i)$, then $X_i' = X_i$ for $i \neq k$,

\begin{eqnarray*}
 X_k' & = & M(- e_k + \sum\limits_{b_{ik} > 0}b_{ik}e_i) + M(- e_k - \sum\limits_{b_{ik} < 0}b_{ik}e_i)\\
&& + \sum\limits_{\xi_i \rightarrow x_k \rightarrow \xi_j \in \widetilde{\mathcal{Q}}_2}(-1)^{\tau (e_{n+i},e_{n+j})}M(- e_k + \sum\limits_{b_{ik} > 0}b_{ik}e_i + e_{n+i} + e_{n+j} ).
\end{eqnarray*}
\end{cor}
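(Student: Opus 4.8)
The plan is to obtain this Corollary by directly unpacking the definition of the mutated super-toric frame $M'$ given just before the preceding Proposition, using that Proposition only to know that everything in sight is well defined. First I would invoke parts (1) and (3) of the preceding Proposition: $M'$ is again a super-toric frame and $(M',\mu_k(\widetilde{\mathcal{Q}}))$ is a quantum super-seed, so $X_i':=M'(e_i)$ is a legitimate nonzero element of $\mathcal{F}$ for every $i\in[1,m+n]$. The assertion $X_i'=X_i$ for $i\neq k$ is then immediate, since the formula defining $M'$ sets $M'(e_i)=M(e_i)$ whenever $i\neq k$, and $X_i=M(e_i)$ by definition.

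Next I would treat $X_k'=M'(e_k)$ by substituting the $i=k$ branch of the formula defining $M'$. The only genuine content is to reconcile the two labelling conventions for the odd vertices: in Sections~3--4 the odd vertices are $x_{n+1},\dots,x_{n+m}$ with associated standard basis vectors $e_{n+1},\dots,e_{n+m}$ of $\mathbb{Z}^{n|m}$, whereas a $2$-path in $\widetilde{\mathcal{Q}}_2$ is written, following Section~2, as $\xi_i\to x_k\to\xi_j$. (Note also that the display defining $M'(e_k)$ reuses the letters $i,j$ both for the summation over even vertices and for the $2$-path endpoints; in the Corollary these are disentangled, the endpoints being relabelled $\xi_i,\xi_j$.) Under the identification $\xi_i=x_{n+i}$, a summand of $M'(e_k)$ indexed by a $2$-path whose endpoints are $x_{n+i}$ and $x_{n+j}$ becomes $(-1)^{\tau(e_{n+i},e_{n+j})}\,M\bigl(-e_k+\sum_{b_{\ell k}>0}b_{\ell k}e_\ell+e_{n+i}+e_{n+j}\bigr)$, which is exactly the term appearing in the claimed formula for $X_k'$; the $\tau$-signs match because $\tau$ depends only on the positions of the nonzero odd coordinates, which are shifted uniformly by $n$.

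Finally I would record the small sanity check that each monomial occurring lies in the distinguished basis $\mathfrak{s}$, that is, that $M$ is genuinely defined and nonzero on the relevant lattice vectors and that the relation ``$X^{e+f}=0$ if $(e+f)_j\geq 2$ for some $j>n$'' is never triggered. This follows from the definition of an extended quiver: $I_k\cap J_k=\emptyset$, and for a $2$-path $\xi_i\to x_k\to\xi_j\in\widetilde{\mathcal{Q}}_2$ one has $\xi_i\in I_k$ and $\xi_j\in J_k$, hence $i\neq j$, so $-e_k+\sum_{b_{\ell k}>0}b_{\ell k}e_\ell+e_{n+i}+e_{n+j}$ has every coordinate in positions $>n$ equal to $0$ or $1$. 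There is no real obstacle here — this is a corollary in the strict sense — and the only step requiring care is the bookkeeping of the index shift between the two odd-vertex conventions together with the attendant $\tau$-signs; once that is pinned down the Corollary follows.
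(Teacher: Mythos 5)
Your proposal is correct and matches the paper's treatment: the paper gives no argument beyond ``by the above proposition, the corollary can be obtained directly,'' and your direct unpacking of the definition of $M'$ (with the relabelling $\xi_i = x_{n+i}$ and the check that no odd coordinate exceeds $1$) is exactly that direct derivation, spelled out.
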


\begin{rem}
Unlike the quantum cluster algebra, the mutation of quantum super-seed is not involutive. That is, $\mu_k(\mu_k(M)) \neq M$. But we have $\Lambda_{\mu_k(\mu_k(M))} = \Lambda_{M}$.
\end{rem}

\subsection{Quantum Cluster Superalgebra}
For a quantum initial seed $(M,\widetilde{\mathcal{Q}})$, we denote $\widetilde{\mathbf{X}} = \{X_1, X_2, \cdots ,X_{n + m}\}$, the corresponding element in $\mathcal{F}$ is given by $X_i = M(e_i)$. Set
$$\mathbf{C} = \{X_i | i \ is \ the \ frozen \ point \ of \ \widetilde{\mathcal{Q}} \ or \ the \ odd \ point\}.$$
We call the subset $\mathbf{X} = \widetilde{\mathbf{X}} - \mathbf{C} $ the cluster of quantum super-seed $(M,\widetilde{\mathcal{Q}})$.
By the definition of the quantum cluster superalgebra, the following proposition can be obtained obviously.
\begin{prop}
The set $\mathbf{C}$ is invariant under mutation.
\end{prop}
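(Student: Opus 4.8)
The plan is to unwind the definitions so that the claim becomes a tautology about which basis vectors $e_i$ of $\mathbb{Z}^{n|m}$ occur in the support of each $M'(e_i)$, together with the observation that the frozen/odd indices are precisely the ones the mutation formula never touches. Concretely, let me fix a quantum super-seed $(M,\widetilde{\mathcal{Q}})$, let $k\in\mathcal{Q}_0$ be a non-frozen even vertex at which mutation is allowed, and set $(M',\mu_k(\widetilde{\mathcal{Q}}))=\mu_k(M,\widetilde{\mathcal{Q}})$. Write $F$ for the set of indices $i$ such that $i$ is a frozen point of $\widetilde{\mathcal{Q}}$ or an odd point, so that $\mathbf{C}=\{X_i\mid i\in F\}$ for the initial seed and likewise $\mathbf{C}'=\{X_i'\mid i\in F'\}$ for $\mu_k(\widetilde{\mathcal{Q}})$, where $F'$ is the corresponding index set for the mutated quiver. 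The first step is to check $F'=F$: the underlying quiver $\mathcal{Q}$ mutates at the even vertex $k$ by the usual rule, which changes no vertex from non-frozen to frozen or vice versa (the set $\{x_{l+1},\dots,x_{n+m}\}$ of frozen plus all odd vertices is prescribed once and for all), and mutation at an even vertex certainly does not alter the even/odd labelling of any vertex. Hence $F'=F$ as subsets of $[1,n+m]$.

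The second step is to show $X_i'=X_i$ for every $i\in F$. Since every $i\in F$ satisfies $i\neq k$ (a frozen or odd index is by hypothesis not the mutation direction $k$, as $k\le l\le n$ is even and non-frozen), Corollary preceding this proposition — equivalently the definition of $M'$ — gives $M'(e_i)=M(e_i)$, i.e.\ $X_i'=X_i$, for all such $i$. Combining with $F'=F$ yields $\mathbf{C}'=\{X_i'\mid i\in F'\}=\{X_i\mid i\in F\}=\mathbf{C}$, which is exactly the assertion that $\mathbf{C}$ is invariant under the mutation $\mu_k$. Finally, since an arbitrary seed in the quantum cluster superalgebra is reached from the initial seed by a finite sequence of such mutations $\mu_{k_1},\dots,\mu_{k_r}$, all at non-frozen even vertices, an immediate induction on $r$ gives that $\mathbf{C}$ is the same set in every seed.

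The only point that requires a word of care — and the one I would treat as the ``main obstacle,'' though it is really just a bookkeeping issue — is making precise the claim that $F$ does not change: one must confirm that the definition of ``frozen point'' for $\mu_k(\widetilde{\mathcal{Q}})$ uses the same distinguished set $\{x_{l+1},\dots,x_{n+m}\}$ as for $\widetilde{\mathcal{Q}}$, rather than being redetermined by, say, the arrow structure of the mutated quiver. With the convention fixed in Section 3 (the indices $l+1,\dots,n+m$ are declared frozen at the outset and odd vertices are always frozen by the standing assumption of Section 2), this is immediate, and the rest of the argument is the short tautological chain above. I would therefore present the proof as: (i) $k\notin F$; (ii) hence $X_i'=X_i$ for $i\in F$ by the mutation formula for $M'$; (iii) the index set $F$ is mutation-independent; (iv) conclude $\mathbf{C}'=\mathbf{C}$ and induct over composite mutations.
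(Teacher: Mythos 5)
Your argument is correct and is exactly the definitional unwinding that the paper itself has in mind: the paper offers no written proof, stating only that the claim ``can be obtained obviously'' from the definitions, and your steps (the frozen/odd index set is fixed once and for all, mutation occurs only at a non-frozen even vertex $k\le l$, and $M'(e_i)=M(e_i)$ for $i\neq k$, followed by induction over mutation sequences) supply precisely the missing details. No discrepancy with the paper's approach.
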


\begin{Def}
The quantum cluster superalgebra  $\mathcal{A}(\widetilde{\mathcal{Q}})$ is a $\mathbb{Z}(q^{\pm \frac{1}{2}})$ subalgebra of $\mathcal{F}$ generated by the union of clusters of all possible mutation sequence of $(M, \widetilde{\mathcal{Q}})$ together with all the element in $\mathbf{C}$.
\end{Def}

We have known the fact that the mutation of quantum super-seed is not involutive. But we have the following proposition.

\begin{prop}
\label{prop1}
If the mutation of $\widetilde{\mathcal{Q}}$ at the direction $k$ is allowed, then  $\mathcal{A}(\widetilde{\mathcal{Q}}) = \mathcal{A}(\mu_k^2(\widetilde{\mathcal{Q}}))$.
\end{prop}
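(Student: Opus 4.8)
### Proof proposal for Proposition \ref{prop1}

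The plan is to show that the two quantum cluster superalgebras $\mathcal{A}(\widetilde{\mathcal{Q}})$ and $\mathcal{A}(\mu_k^2(\widetilde{\mathcal{Q}}))$ have exactly the same generating set inside $\mathcal{F}$, and hence coincide as $\mathbb{Z}[q^{\pm\frac12}]$-subalgebras. The subtlety, flagged in the remark preceding the statement, is that $\mu_k$ is \emph{not} involutive on the level of super-seeds: $\mu_k^2(M)\neq M$ in general, even though $\Lambda_{\mu_k^2(M)}=\Lambda_M$ by Proposition on compatible-pair mutation. So I cannot simply say ``$\mu_k^2(\widetilde{\mathcal{Q}})=\widetilde{\mathcal{Q}}$, done.'' Instead I have to trace what $\mu_k^2$ does to the extended quiver and to the toric frame, and argue that the \emph{set of clusters reachable by all mutation sequences}, together with the frozen/odd part $\mathbf{C}$, is unchanged.

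First I would record the quiver-level fact: since every mutation $\mu_k$ of the extended quiver $\widetilde{\mathcal{Q}}$ is an involution (stated explicitly just after the definition of the mutation of extended quivers), we have $\mu_k^2(\widetilde{\mathcal{Q}})=\widetilde{\mathcal{Q}}$ as extended quivers. Thus the only difference between the seed $(M,\widetilde{\mathcal{Q}})$ and the seed $(M',\widetilde{\mathcal{Q}})=\mu_k^2(M,\widetilde{\mathcal{Q}})$ lies in the toric frame: $M$ versus $M'$, with $\Lambda_M=\Lambda_{M'}$. Next I would pin down the relation between $M$ and $M'$. Using the Corollary describing $X_k'=M'(e_k)$ after one mutation, one computes $X_k''=M''(e_k)$ after a second mutation in direction $k$; the two extra (odd-odd) summands are governed by the $2$-paths $\xi_i\to x_k\to\xi_j$ in $\widetilde{\mathcal{Q}}_2$, which are the same before and after because $\mu_k^2(\widetilde{\mathcal{Q}})=\widetilde{\mathcal{Q}}$, while the even part reproduces the classical quantum-cluster computation. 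The upshot I expect is that $X_k''$ and $X_k$ differ only by multiplication by a monomial in the frozen/odd variables $X^f$ with $f$ supported on $\mathbf{C}$ — concretely, $M''=M\circ(\text{shift by an element of the frozen sublattice})$, so that each cluster variable produced from $(M'',\widetilde{\mathcal{Q}})$ is obtained from the corresponding one produced from $(M,\widetilde{\mathcal{Q}})$ by left/right multiplication by such a monomial, which is a \emph{unit} in $\mathcal{F}$ and lies in the algebra generated by $\mathbf{C}$.

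The core of the argument is then the following inclusion in both directions. Any element of the generating set of $\mathcal{A}(\mu_k^2(\widetilde{\mathcal{Q}}))$ is a cluster variable obtained by some mutation sequence $\mu_{i_t}\cdots\mu_{i_1}$ applied to $(M'',\widetilde{\mathcal{Q}})$, or an element of $\mathbf{C}$ (invariant by the Proposition that $\mathbf{C}$ is mutation-invariant). I would prove by induction on the length $t$ of the mutation sequence that the seed $\mu_{i_t}\cdots\mu_{i_1}(M'',\widetilde{\mathcal{Q}})$ has the same extended quiver as $\mu_{i_t}\cdots\mu_{i_1}(M,\widetilde{\mathcal{Q}})$, the same $\Lambda$, and a toric frame differing from it by a shift in the frozen sublattice — this is exactly where compatibility (condition \textbf{A}, $D=B^T(\Lambda_{11}\ \Lambda_{12})$) and condition \textbf{B} of a compatible pair are used, to see that the exchange relations transform compatibly and that the odd-odd correction terms match. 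Since a shift by the frozen sublattice changes each cluster variable only by a unit lying in $\langle\mathbf{C}\rangle$, every generator of $\mathcal{A}(\mu_k^2(\widetilde{\mathcal{Q}}))$ lies in $\mathcal{A}(\widetilde{\mathcal{Q}})$, and symmetrically (applying $\mu_k^2$ again, or running the same argument from the other seed) every generator of $\mathcal{A}(\widetilde{\mathcal{Q}})$ lies in $\mathcal{A}(\mu_k^2(\widetilde{\mathcal{Q}}))$. Hence the two subalgebras are equal.

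The main obstacle I anticipate is the bookkeeping in that induction: one must show that the ``monomial discrepancy'' between the two toric frames stays inside the frozen sublattice under \emph{arbitrary} further mutations $\mu_{i}$ (not just $\mu_k$), and that it never contaminates a genuine (non-frozen, even) cluster variable in a way that escapes $\langle\mathbf{C}\rangle$. This requires carefully tracking how the matrices $E_\varepsilon$, $F_\varepsilon$ act on the frozen columns and checking that the sign factors $(-1)^{\tau(e_i,e_j)}$ attached to the odd-odd terms are genuinely the same for the two frames — which is precisely what condition \textbf{B} ($\lambda_{i,k}=-\lambda_{i,l}$ for $2$-path-connected odd vertices) guarantees. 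Everything else is the routine quantum-cluster-algebra transfer of Berenstein–Zelevinsky's argument to the super setting.
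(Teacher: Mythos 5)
Your overall strategy matches the paper's in its key point: compare $X_k''=\mu_k^2(X_k)$ with $X_k$ and show that the discrepancy is an invertible element of the subalgebra generated by $\mathbf{C}$, so that the generators of each algebra lie in the other. The paper does exactly this, but only for the initial cluster: it computes
\[
X_k'' \;=\; X_k \;+\; \sum_{\xi_i\to x_k\to\xi_j\in\widetilde{\mathcal{Q}}_2}(-1)^{\tau(e_{n+i},e_{n+j})}\,M(e_k+e_{n+i}+e_{n+j})
\]
and then inverts this relation to express $X_k$ as $X_k''$ times an element built from the odd coordinates. Your additional induction over arbitrary further mutation sequences is a more complete version of what the paper leaves implicit (the inclusion $\mathcal{A}(\mu_k^2(\widetilde{\mathcal{Q}}))\subseteq\mathcal{A}(\widetilde{\mathcal{Q}})$ is free, since every mutation sequence from $(M'',\widetilde{\mathcal{Q}})$ is a tail of one from $(M,\widetilde{\mathcal{Q}})$; the other inclusion is the one that needs work, and the paper only checks it for the initial cluster variables).

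The genuine problem is your description of the discrepancy. You assert that $X_k''$ and $X_k$ differ by multiplication by a \emph{monomial} $X^f$ with $f$ supported on the frozen sublattice, i.e.\ that $M''=M\circ(\text{shift by a frozen lattice element})$. That is false. From the mutation formula above, $X_k''=X_k\bigl(1+N\bigr)$ where $N$ is a \emph{sum} of odd--odd monomials $\pm q^{\ast}M(e_{n+i}+e_{n+j})$, one for each $2$-path through $x_k$; this is not a single monomial, and $M''$ is not $M$ precomposed with a lattice translation. What saves the conclusion is that $N$ is nilpotent (each summand is a product of two odd generators, $(X^{e_i})^2=0$ for $i>n$, and there are finitely many odd vertices), so $1+N$ is a unit of the subalgebra generated by $\mathbf{C}$ with inverse $1-N+N^2-\cdots$, a finite sum. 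Your induction step, however, is engineered around propagating a frozen-lattice shift through the matrices $E_\varepsilon$, $F_\varepsilon$; with the correct form of the discrepancy you would instead have to show that a multiplicative factor of the form $1+(\text{nilpotent element of }\langle\mathbf{C}\rangle)$ persists in that form under arbitrary further mutations, which is a different and messier piece of bookkeeping than the one you outline. The plan is salvageable, but the stated mechanism does not work as written.
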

\begin{proof}
One can easily obtains:

\begin{eqnarray*}
 X_k'' & = & X_k
+ \sum\limits_{\xi_i \rightarrow x_k \rightarrow \xi_j \in \widetilde{\mathcal{Q}}_2}(-1)^{\tau (e_{n+i},e_{n+j})}M( e_k  + e_{n+i} + e_{n+j} ).
\end{eqnarray*}
By some direct computation,
\begin{eqnarray*}
 X_k & = & X_k''(1
+ \sum\limits_{\xi_i \rightarrow x_k \rightarrow \xi_j \in \widetilde{\mathcal{Q}}_2}(-1)^{\tau (e_{n+i},e_{n+j})}M(e_{n+i} + e_{n+j} )).
\end{eqnarray*}
where $x''_k:=\mu_k^2(x_k)$.
This expression is a combination of the initial coordinates $X_k$. It belongs to the algebra generated by $X_k$.
\end{proof}
\subsection{Example}

\begin{example}
Our most elementary example is the quiver $\widetilde{\mathcal{Q}}$ with one even vertices $x_1$
and two odd vertices $x_2, x_3$.  Set 
 $M : \mathbb{Z}^{1|2} \rightarrow \mathcal{F}$, which is determined by $\Lambda_M = \left(
                                                                                       \begin{array}{ccc}
                                                                                         0 & \lambda_1 & -\lambda_1 \\
                                                                                         -\lambda_1 & 0 & \lambda_3 \\
                                                                                         \lambda_1 & -\lambda_3 & 0 \\
                                                                                       \end{array}
                                                                                     \right).
 $
 Though the directly computing, $(\Lambda_M, \widetilde{\mathcal{Q}} )$ is compatible. $(M, \widetilde{\mathcal{Q}})$ is the initial quantum super-seed.
$$
 \xymatrix{
{\color{red}x_2}\ar@<3pt>@{->}[rd]&&
{\color{red}x_3}\ar@<-3pt>@{<-}[ld]\\
& x_1&
}
\quad
\stackrel{\mu_{x_1}}{\Longrightarrow}
\quad
 \xymatrix{
{\color{red} x_2}\ar@<3pt>@{<-}[rd]&&
{\color{red} x_3}\ar@<-3pt>@{->}[ld]\\
& x_1'&
}
\quad
\stackrel{\mu_{x_1'}}{\Longrightarrow}
\quad
 \xymatrix{
{\color{red} x_2}\ar@<3pt>@{->}[rd]&&
{\color{red} x_3}\ar@<-3pt>@{<-}[ld]\\
& x_1''&
}
\quad
\stackrel{\mu_{x_1''}}{\Longrightarrow}
\quad
\cdots
$$

Let $X_i = M(e_i)$ ,then we have $X_1X_2 = q^{\lambda_1}X_2X_1$, $X_1X_3 = q^{- \lambda_1}X_3X_1$, $X_2X_3 = (-1)q^{\lambda_3}X_3X_2$, $X_2^2 = X_3^2 = 0.$ By the directly computing, we have the following identity

\begin{eqnarray*}
 X1' & = & M(- e_1) + M(- e_1)+ (-1)^{\tau (e_{2},e_{3})}M(- e_1  + e_{2} + e_{3} )\\
 & = & X_1^{-1}(2 + q^{-\frac{\lambda_3}{2}}X_2X_3).
\end{eqnarray*}

Then, we have $\Lambda_{M'} = \left(
                                                                                       \begin{array}{ccc}
                                                                                         0 &  - \lambda_1 & \lambda_1 \\
                                                                                          \lambda_1 & 0 & \lambda_3 \\
                                                                                         - \lambda_1 & -\lambda_3 & 0 \\
                                                                                       \end{array}
                                                                                     \right)$
and $M'(e_1) =  X_1^{-1}(2 + q^{-\frac{\lambda_3}{2}}X_2X_3),M'(e_2) = X_2, M(e_3) = X_3$. We have

\begin{eqnarray*}
 X_1'' & = & M'(- e_1) + M'(- e_1)+ (-1)^{\tau (e_{3},e_{2})}M'(- e_1  + e_{2} + e_{3} )\\
 & = & (X_1')^{-1}(2 - q^{-\frac{\lambda_3}{2}}X_2X_3)\\
 & = & X_1(2 + q^{-\frac{\lambda_3}{2}}X_2X_3)^{- 1}(2 - q^{-\frac{\lambda_3}{2}}X_2X_3)\\
 & = & X_1(1 - q^{-\frac{\lambda_3}{2}}X_2X_3).
 \end{eqnarray*}

Similarly as above, we have the following identities.

$$X_1''' = X_1^{ - 1}(2 + 3q^{- \frac{\lambda_3}{2}}X_2X_3),$$
$$X_1'''' = X(1 - 2q^{- \frac{\lambda_3}{2}}X_2X_3).$$

Like the classical case, we can see that, the mutation is not involution, and the process is infinite and aperiodic. By the proposition $\ref{prop1}$, the corresponding quantum cluster superalgebra is a  $\mathbb{Z}[q^{\pm \frac{1}{2}}]$ algebra generated by $X_1^{\pm 1}, X_2, X_3$ with the following relations,

 $$X_1^{\pm 1}X_2 = q^{\pm \lambda_1}X_2X_1, \ X_1^{\pm}X_3 = q^{\pm\lambda_1}X_3X_1,\  X_2X_3 = (-1)q^{\lambda_3}X_3X_2, \ X_2^2 = X_3^2 = 0.$$
\end{example}

\begin{example}
Consider the quiver $\widetilde{\mathcal{Q}} $ with two even vertices $\{x_1,x_2\}$and two odd vertices$\{x_3,x_4\}$:
$$
\xymatrix{
{\color{red} x_3}\ar@{->}[d]&
{\color{red} x_4}\ar@<-1pt>@{<-}[ld]\\
x_1&x_2\ar@{<-}[l]
}
$$
Consider the following mutation sequence:

$$
\begin{array}{ccccccc}
 \xymatrix{
{\color{red} x_3}\ar@{->}[d]&
{\color{red} x_4}\ar@<-1pt>@{<-}[ld]\\
x_1&x_2\ar@{<-}[l]
}
&
\stackrel{\mu_1}{\Longrightarrow}
&
 \xymatrix{
{\color{red} x_3}\ar@<3pt>@{->}[rd]\ar@{<-}[d]&
{\color{red} x_4}\ar@<-2pt>@{->}[ld]\ar@{<-}[d]\\
x_1'&x_2\ar@{->}[l]
}
&
\stackrel{\mu_2}{\Longrightarrow}
&
 \xymatrix{
{\color{red} x_3}\ar@<3pt>@{<-}[rd]&
{\color{red} x_4}\ar@{->}[d]\\
x_1'&x_2'\ar@{<-}[l]
}
&
\stackrel{\mu_1}{\Longrightarrow}
&
 \xymatrix{
{\color{red} x_3}\ar@<3pt>@{<-}[rd]&
{\color{red} x_4}\ar@{->}[d]\\
x_1''&x_2'\ar@{->}[l]
}
\\[50pt]
&&&&&&\Downarrow\mu_2\\[10pt]
\cdots
&\stackrel{\mu_1}{\Longleftarrow}&
\xymatrix{
{\color{red} x_3}\ar@{->}[d]&
{\color{red} x_4}\ar@<-3pt>@{<-}[ld]\\
x_1'''&x_2'''\ar@{<-}[l]
}
&\stackrel{\mu_2}{\Longleftarrow}&
\xymatrix{
{\color{red} x_3}\ar@{->}[d]&
{\color{red} x_4}\ar@<-3pt>@{<-}[ld]\\
x_1'''&x_2''\ar@{->}[l]
}
&
\stackrel{\mu_1}{\Longleftarrow}
&
 \xymatrix{
{\color{red} x_3}\ar@<3pt>@{->}[rd]\ar@{<-}[d]&
{\color{red} x_4}\ar@<-3pt>@{->}[ld]\ar@{<-}[d]\\
x_1''&x_2''\ar@{<-}[l]
}
\end{array}
$$
  Set
 $M : \mathbb{Z}^{2|2} \rightarrow \mathcal{F}$, which is determined by $\Lambda_M = \left(
                                                                                    \begin{array}{cccc}
                                                                                      0 & \lambda_1 & 0 & 0 \\
                                                                                      -\lambda_1 & 0 & 0 & 0 \\
                                                                                      0 & 0 & 0 & \lambda_2 \\
                                                                                      0 & 0&  - \lambda_2 & 0 \\
                                                                                    \end{array}
                                                                                  \right).
 $
Then $(\Lambda_M, \widetilde{\mathcal{Q}} )$ is compatible, and $(M, \widetilde{\mathcal{Q}})$ is the initial quantum super-seed. Let$X_i = M(e_i),$ then we have

\begin{eqnarray*}
 X_1' & = & M(- e_1) + M(- e_1 + e_2)+ (-1)^{\tau (e_{3},e_{4})}M(- e_1  + e_{3} + e_{4})\\
 & = & X_1^{-1} + q^{\frac{\lambda_1}{2}}X_1^{- 1}X_2 + q^{ - \frac{\lambda_2}{2}}X_1^{ - 1}X_3X_4 \\
 & = & X_1^{-1}(1 + q^{\frac{\lambda_1}{2}}X_2 + q^{ - \frac{\lambda_2}{2}}X_3X_4).
\end{eqnarray*}

We have $\Lambda_{M'} =\left(
                                       \begin{array}{cccc}
                                         0 & -\lambda_1 &  0 & 0\\
                                         \lambda_1 & 0 &0 & 0 \\
                                         0 & 0 & 0 & \lambda_2 \\
                                         0 & 0 &  - \lambda_2 & 0 \\
                                           \end{array}
                                           \right)
$
and $M'(e_1) = X_1^{-1} + q^{\frac{\lambda_1}{2}}X_1^{- 1}X_2 + q^{ - \frac{\lambda_2}{2}}X_1^{ - 1}X_3X_4,$ $ M'(e_2) = X_2$, $M(e_3) = X_3$, $M'(e_4) = X_4$. Therefore,

\begin{eqnarray*}
 X_2' & = & M'(- e_2) + M'(- e_2 + e_1)+ (-1)^{\tau (e_{3},e_{4})}M'(- e_2  + e_{3} + e_{4})\\
 & = & X_2^{-1} + q^{ - \frac{\lambda_1}{2}}X_1'X_2^{- 1} + q^{ - \frac{\lambda_2}{2}}X_2^{ - 1}X_3X_4 \\
 & = & X_1^{-1} + X_2^{-1} + q^{ - \frac{\lambda_1}{2}}X_1^{- 1}X_2^{- 1} + q^{ - \frac{\lambda_1 + \lambda_2}{2}}X_1^{- 1}X_2^{ - 1}X_3X_4  +q^{ - \frac{\lambda_2}{2}}X_2^{ - 1}X_3X_4 .
\end{eqnarray*}

Similarly, after computation, one obtain:

\begin{eqnarray*}
 X_1'' & = & M''(- e_1) + M''(- e_1 + e_2)\\
 & = & X_1'^{-1} + q^{  \frac{\lambda_1}{2}}X_1'^{-1}X_2'  \\
 & = &(1 + q^{\frac{\lambda_1}{2}}X_2 + q^{- \frac{\lambda_2}{2}}X_3X_4)^{-1}X_1[1 + q^{\frac{\lambda_1}{2}}X_2']\\
 & = &(1 + q^{\frac{\lambda_1}{2}}X_2 + q^{- \frac{\lambda_2}{2}}X_3X_4)^{-1}X_1[X_1X_2 + q^{\frac{\lambda_1}{2}}X_1 \\
 &&+ (1 + q^{\frac{\lambda_1}{2}}X_2 + q^{- \frac{\lambda_2}{2}}X_3X_4) + q^{\frac{\lambda_1 - \lambda_2}{2}}X_1X_3X_4]X_2^{-1}\\
 & = &(1 + q^{\frac{\lambda_1}{2}}X_1)X_2^{-1}.
\end{eqnarray*}

Similarly, there are:

$$X_2'' = X_1( 1 - q^{- \frac{\lambda_2}{2}}X_3X_4),\ X_1''' = X_2(1 - q^{- \frac{\lambda_2}{2}}X_3X_4),\ X_2''' = X_1^{-1} + q^{\frac{\lambda_1}{2}}X_1^{- 1}X_2 + q^{ - \frac{\lambda_2}{2}}X_1^{ - 1}X_3X_4.$$

As we see, the mutation is not involution, and the process is still infinite and aperiodic. The corresponding quantum cluster superalgebra is $\mathbb{Z}[q^{\pm \frac{1}{2}}]$ subalgebra of $\mathcal{F}$ generated by $X_1, X_2, X_3, X_4, X_1',X_2',X_1''$.
\end{example}

\section{Quantum Laurent Phenomenon}

Let $(M, \widetilde{\mathcal{Q}})$ be quantum super-seed in $\mathcal{F}$. $\widetilde{\mathbf{X}} = \{X_i | X_i = M(e_i)\}$, let $\mathcal{T}(\widetilde{\mathbf{X}})$ denote quantum super torus generated by $\widetilde{\mathbf{X}}$. In this section, we assumed that $\widetilde{\mathbf{X}}$ is numbered so that its cluster $\mathbf{X}$ has the form $(X_1,X_2,\cdots,X_l)$. Thus,  we have $\mathbf{C} = \widetilde{\mathbf{X}} - \mathbf{X}$. The ground ring $\mathbb{ZP}$ is $\mathbb{Z}[q^{\pm\frac{1}{2}}, X_{l+1}^{\pm 1},\cdots,X_n^{\pm 1},X_{n+1},\cdots,X_{n + m}]$. $\mathbf{X}_k$ denote the cluster of $(M_k, \widetilde{\mathcal{Q}}_k) = \mu_k((M, \widetilde{\mathcal{Q}})).$ Then
$$\mathbf{X}_k = \mathbf{X} - \{X_k\} \cup \{X_k'\}$$

We denote $\mathcal{U}(M,\widetilde{\mathcal{Q}}) \subset \mathcal{F}$  the $\mathbb{ZP}$-subalgebra of $\mathcal{F}$ given by

$$\mathcal{U}(M,\widetilde{\mathcal{Q}}) = \mathbb{ZP}[\mathbf{X}^{\pm 1}] \cap \mathbb{ZP}[\mathbf{X}_1^{\pm 1}] \cap \cdots \cap \mathbb{ZP}[\mathbf{X}_l^{pm 1}]\subset \mathcal{F} .$$

\begin{lem}
(1) Every element $Y \in \mathbb{ZP}[\mathbf{X}]$ can be uniquely written in the form

$$Y = \sum\limits_{r \in \mathbb{Z}}c_r X_1^r,$$
where each coefficient $c_r$ belong to $\mathbb{ZP}[X_2^{\pm 1}, \cdots, X_l^{\pm 1}]$, and all but finitely many of them are equal to 0.

(2) Every element $Y \in  \mathbb{ZP}[\mathbf{X}^{\pm 1}] \cap \mathbb{ZP}[\mathbf{X}_1^{\pm 1}]$ can be uniquely written in the form

$$Y = c_0 + \sum_{r \geq 0}(c_r X_1^r + c_r'(X_1')^r)$$
where all coefficient $c_r$ and $c_r'$ belong to $\mathbb{ZP}[X_2^{\pm 1}, \cdots, X_l^{\pm 1}]$, and all but finitely many of them are equal to 0.

(3)An element $Y \in \mathcal{F}$ belongs to $\mathbb{ZP}[X_1,X_1',X_2^{\pm 1}, \cdots , X_1^{\pm 1}]$ if and only if it has the form $Y = \sum\limits_{r \in \mathbb{Z}}c_r X_1^r$, and for $r > 0$, the coefficient $c_{- r}$ is divisible by $P_{b^1}^r$ in algebra $\mathbb{ZP}[X_2^{\pm 1}, \cdots, X_l^{\pm 1}]$, where $P_{b^1}^r$ is in the following proof.
\end{lem}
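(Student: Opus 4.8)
The plan is to establish the three normal-form statements in sequence, each building on the previous, in direct analogy with the classical Laurent-phenomenon machinery of Fomin--Zelevinsky and its quantum version in~\cite{BZ1}, but tracking carefully the sign twists $(-1)^{\tau(\cdot,\cdot)}$ and the nilpotency relations $(X^{e_i})^2=0$ coming from the odd vertices. For part~(1), I would observe that $\mathbb{ZP}[\mathbf{X}]$ is obtained from the ground ring $\mathbb{ZP}$ by adjoining $X_1,\dots,X_l$, and that after inverting $X_2,\dots,X_l$ the resulting ring is a quantum supertorus in the variables $X_1;X_2^{\pm1},\dots,X_l^{\pm1}$ with the odd generators already inside $\mathbb{ZP}$; since $X_1$ is even it is a genuine non-zero-divisor and $X_1$-adically the ring is ``polynomial'' in $X_1$. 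Hence every element has a unique expansion $\sum_r c_r X_1^r$ with $c_r\in\mathbb{ZP}[X_2^{\pm1},\dots,X_l^{\pm1}]$ and only finitely many $c_r\neq0$; uniqueness follows because the $X^e$ with $e$ ranging over the super-lattice form a $\mathbb{ZP}$-basis of the localized torus (Proposition on the based quantum supertorus), so collecting the $X_1$-degree is well defined.

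For part~(2), set $X_1'=M'(e_1)=X_1^{-1}P$, where by the Corollary $P=P_{b^1}:=M(\sum_{b_{i1}>0}b_{i1}e_i)+M(-\sum_{b_{i1}<0}b_{i1}e_i)+\sum_{\xi_i\to x_1\to\xi_j\in\widetilde{\mathcal{Q}}_2}(-1)^{\tau(e_{n+i},e_{n+j})}M(\sum_{b_{i1}>0}b_{i1}e_i+e_{n+i}+e_{n+j})$ is an element of $\mathbb{ZP}[X_2^{\pm1},\dots,X_l^{\pm1}]$ that is, up to a unit of $\mathbb{ZP}[q^{\pm1/2}]$, not a zero-divisor (its lowest-degree component in the even variables is a unit in the ground ring since $X_{l+1},\dots,X_n$ are invertible and the constant ``$1$'' terms survive). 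The first step is to rewrite $\mathbb{ZP}[\mathbf{X}^{\pm1}]=\mathbb{ZP}[X_1^{\pm1},X_2^{\pm1},\dots,X_l^{\pm1}]$ and $\mathbb{ZP}[\mathbf{X}_1^{\pm1}]=\mathbb{ZP}[(X_1')^{\pm1},X_2^{\pm1},\dots,X_l^{\pm1}]$ using part~(1), then note that the change of variables $X_1\leftrightarrow X_1'$ is given by $X_1X_1'=P$ (up to a sign and a power of $q$ handled by the super-commutation relations $X_1X_1'=(-1)^{\tau}q^{*}X_1'X_1$, which here is simply $X_1'=X_1^{-1}P$ with $P$ even). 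An element lying in both rings has a Laurent expansion in $X_1$ and a Laurent expansion in $X_1'$; substituting $X_1'=X_1^{-1}P$ and using the uniqueness of~(1), the positive powers of $X_1'$ contribute only to non-positive powers of $X_1$ with coefficients divisible by powers of $P$, and conversely. Matching the two expansions forces the stated shape $Y=c_0+\sum_{r\geq0}(c_rX_1^r+c_r'(X_1')^r)$, with $c_r,c_r'$ uniquely determined; the only subtlety is to check that $P$ is a non-zero-divisor in $\mathbb{ZP}[X_2^{\pm1},\dots,X_l^{\pm1}]$ so that division is unambiguous, which uses the ``$+1$'' (or ``$+2$'' in the odd-loop case) leading term together with invertibility of the frozen evens.

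Part~(3) is the characterization that feeds the induction for the full Laurent phenomenon. Starting from the normal form of~(2), expand each $(X_1')^r=(X_1^{-1}P)^r=X_1^{-r}P^r$ (the evenness of $P$ means no sign intervenes, and $P$ commutes with itself so $P^r$ is unambiguous); then $Y=\sum_{r\geq0}c_rX_1^r+c_0+\sum_{r\geq0}c_r'P^rX_1^{-r}$, so in the expansion $Y=\sum_{r\in\mathbb{Z}}\tilde c_rX_1^r$ of part~(1) one gets $\tilde c_{-r}=c_r'P^r$ for $r>0$, which is manifestly divisible by $P_{b^1}^r:=P^r$. Conversely, if $Y=\sum_r \tilde c_rX_1^r$ with $\tilde c_{-r}$ divisible by $P^r$ for all $r>0$, write $\tilde c_{-r}=c_r'P^r$ and read the same identity backwards to exhibit $Y$ as a $\mathbb{ZP}$-combination of the $X_1^r$ ($r\geq0$) and the $(X_1')^r=X_1^{-r}P^r$ ($r\geq0$), hence $Y\in\mathbb{ZP}[X_1,X_1',X_2^{\pm1},\dots,X_l^{\pm1}]$. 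I expect the main obstacle to be the verification that $P_{b^1}$ is a non-zero-divisor in the (noncommutative, super) ring $\mathbb{ZP}[X_2^{\pm1},\dots,X_l^{\pm1}]$ and that ``divisibility by $P^r$'' is well defined on both the left and the right — this is where the compatibility condition~B on $\Lambda$ and the structure of $\widetilde{\mathcal{Q}}_2$ (which forces the odd contributions to $P$ to behave coherently under the commutation relations $(R1)$--$(R3)$) must be invoked, exactly as condition~B was used to make the commutation computation in the Proposition on $M'$ go through.
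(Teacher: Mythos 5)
Your overall strategy --- part (1) from quasi-commutation and the monomial basis, parts (2) and (3) from the substitution $X_1\leftrightarrow X_1'$ governed by an element $P$ of $\mathbb{ZP}[X_2^{\pm1},\dots,X_l^{\pm1}]$ together with a divisibility condition on the coefficients of negative powers of $X_1$ --- is the same as the paper's (which follows Berenstein--Zelevinsky). But there is a genuine error at the pivotal step: you write $(X_1')^r=(X_1^{-1}P)^r=X_1^{-r}P^r$, justifying the interchange only by the evenness of $P$. Evenness kills the sign $(-1)^{\tau}$, but not the $q$-commutation: $P$ contains the monomial $X^{-b^1}$, and by the compatibility condition $B^{T}(\Lambda_{11}\ \Lambda_{12})=D$ with \emph{positive} diagonal entries, $X^{-b^1}$ $q$-commutes with $X_1$ with a nonzero exponent $d_1$. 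Hence $X_1^{-1}P\neq PX_1^{-1}$ and $(X_1^{-1}P)^r\neq X_1^{-r}P^r$. This is precisely why the paper defines
$$P_{b^1}^r=\prod_{p=1}^{r}\Bigl(1+q^{\frac{(1-2p)d(b^1)}{2}}X^{-b^1}+\sum_{(k,k')\in S}(-1)^{\tau(e_k,e_{k'})}X^{e_k+e_{k'}}\Bigr),$$
a product of $r$ \emph{distinct} $q$-shifted factors, and proves $(X_1')^r=P_{b^1}^r\,X^{re_1'}$ by a direct computation; $P_{b^1}^r$ is \emph{not} the $r$-th power of $P_{b^1}^1$. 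Since part (3) is stated as divisibility by $P_{b^1}^r$, your reading of that symbol as the literal power $P^r$ makes the divisibility criterion --- and hence the backwards implication in (3) and the matching argument in (2) --- incorrect as written, with each coefficient off by powers of $q$.

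The fix is standard and does not change the architecture of your argument: replace every occurrence of $P^r$ by the $q$-shifted product $P_{b^1}^r$, proving $(X_1')^r=P_{b^1}^r X^{re_1'}$ by induction on $r$ using the commutation relation between $X^{-b^1}$ and $X^{e_1'}$. The concerns you flag at the end (that $P_{b^1}$ is a non-zero-divisor so that division is unambiguous, that the nilpotent odd summands do not spoil this, and that condition B of compatibility is what makes the odd contributions commute coherently) are legitimately the delicate points; the paper glosses over them, asserting only that $P_{b^j}^r$ is central in $\mathbb{ZP}[X_1^{\pm1},\dots,X_{j-1}^{\pm1},X_{j+1}^{\pm1},\dots,X_l^{\pm1}]$, so spelling them out would actually improve on the paper's proof once the $q$-shift is repaired.
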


\begin{proof}
 Consider the ring $\mathbb{ZP}[\mathbf{X}^{\pm 1}]$. Since $X_i$ and $X_j$ are quasi-commuting, thus the first part of lemma follows.

 We can view the $j$ column $b^j$ as the element of $\mathbb{Z}^{n | m}$. Define the elements in $\mathcal{T}$ as follows

 $$P_{b^j}^r=\prod\limits_{p = 1}^{r}(1 + q^{\frac{(1-2p)d(b^j)}{2}}X^{-b^j} + \sum\limits_{{(k,k')\in S}}(-1)^{\tau(e_k,e_k')X^{e_k + e_{k'}}})$$
where $d(b^j)$ denote the minimal positive integer of $\Lambda_{b^j, e}$ for $e \in \mathbb{Z}^{n | m}$. $S = \{(k,k')| k,k' > n, \tilde{b}_{k,j} = 1,\tilde{b}_{k'j} = -1\}$.
 $P_{b^j}^r$ is the center of $\mathbb{ZP}[X_1^{\pm 1}, \cdots, X_{j - 1}^{\pm 1}, X_{j + 1}^{\pm 1}, \cdots, X_l^{\pm 1}].$

 Though directly compute, we have

 $$(X_1')^r = P_{b^1}^rX^{e_1'},$$

 where

 $$e_1' = - e_1 + \sum\limits_{b_{i1} > 0}b_{i1}e_i.$$

To prove the second part of the lemma, note that any $Y \in \mathbb{ZP}[\mathbf{X}^{\pm 1}]$ is of the form

$$Y = \sum\limits_{r = - N}^{N} c_rX_1^r,$$

there exist $P_1 \in \mathbb{ZP}[X_2,X_3, \cdots, X_l]$ such that $X_1 = P_1(X_1')^{-1}$.Then

\begin{eqnarray*}
 Y & = &  \sum\limits_{r = 0}^{N} c_rX_1^r + \sum\limits_{r = 1}^{N} c_{- r}X_1^{- r}\\
 & = &   \sum\limits_{r = 0}^{N} c_r(P_1(X_1')^{-1})^r + \sum\limits_{r = 1}^{N} c_{- r}'(P_{b^1}^r)^{-1}(P_{b^1}^r)(X^{e_1'})^{r}.
\end{eqnarray*}

In addition, $Y \in \mathbb{ZP}[\mathbf{X}_1^{\pm 1}]$, then $c_{- r}$ is divisible by $P_{b^j}^r$ in algebra $\mathbb{ZP}[X_2^{\pm 1}, \cdots, X_l^{\pm 1}]$.

\end{proof}

\begin{prop}
Suppose $l \geq 2$, then

$$\mathcal{U}(M, \widetilde{\mathcal{Q}}) = \bigcap\limits_{j = 2}^n \mathbb{ZP}[X_1,X_1', X_2^{\pm 1},\cdots, X_{j - 1}^{\pm 1}, X_j,X_j',X_{j +1}^{\pm 1}, \cdots, X_l^{\pm 1}].$$
\end{prop}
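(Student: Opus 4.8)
The plan is to imitate the classical argument of Fomin--Zelevinsky (Proposition 4.2 of their ``Laurent phenomenon'' type statement) adapted to the quantum super-setting, where the key difficulty is controlling the odd variables $X_{n+1},\dots,X_{n+m}$ which square to zero and anticommute. The inclusion $\supseteq$ is immediate: each ring $\mathbb{ZP}[X_1,X_1',X_2^{\pm1},\dots,X_{j-1}^{\pm1},X_j,X_j',X_{j+1}^{\pm1},\dots,X_l^{\pm1}]$ contains $\mathbb{ZP}[\mathbf X^{\pm1}]$ and, by part (3) of the Lemma applied in the direction $j$, also contains $\mathbb{ZP}[\mathbf X_j^{\pm1}]$; hence the intersection over $j$ contains $\mathbb{ZP}[\mathbf X^{\pm1}]\cap\bigcap_{j}\mathbb{ZP}[\mathbf X_j^{\pm1}]=\mathcal U(M,\widetilde{\mathcal Q})$. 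Wait---more precisely each summand ring sits between $\mathbb{ZP}[\mathbf X^{\pm1}]$-modules, so I should phrase this as: $\mathcal U(M,\widetilde{\mathcal Q})\subseteq\mathbb{ZP}[\mathbf X^{\pm1}]\cap\mathbb{ZP}[\mathbf X_j^{\pm1}]\subseteq\mathbb{ZP}[X_1,\dots,X_j,X_j',\dots]$ for each $j\geq 2$ by Lemma~(2)--(3), giving $\supseteq$ is actually the easy direction and $\subseteq$ needs the real work. Let me restate: the containment $\mathcal U\subseteq\bigcap_j(\cdots)$ follows directly from Lemma~(2) in each direction $j$, since membership in $\mathbb{ZP}[\mathbf X^{\pm1}]\cap\mathbb{ZP}[\mathbf X_j^{\pm1}]$ forces the expansion form of Lemma~(2). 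So the substantive direction is $\bigcap_j(\cdots)\subseteq\mathcal U$, i.e. to show any $Y$ in that intersection already lies in $\mathbb{ZP}[\mathbf X_1^{\pm1}]\cap\cdots\cap\mathbb{ZP}[\mathbf X_l^{\pm1}]$.

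First I would fix $Y$ in the right-hand intersection and take any direction $k\in\{1,\dots,l\}$; the goal is $Y\in\mathbb{ZP}[\mathbf X_k^{\pm1}]$. If $k\geq 2$ this is essentially built in, so the crux is $k=1$ (and by symmetry of the argument, each $k$ once we also know $Y\in\mathbb{ZP}[\mathbf X^{\pm1}]$). Using part~(1) of the Lemma, write $Y=\sum_{r\in\mathbb Z}c_rX_1^r$ uniquely with $c_r\in\mathbb{ZP}[X_2^{\pm1},\dots,X_l^{\pm1}]$. By part~(3), membership of $Y$ in $\mathbb{ZP}[X_1,X_1',X_2^{\pm1},\dots,X_l^{\pm1}]$ is equivalent to $P_{b^1}^r \mid c_{-r}$ for all $r>0$ in $\mathbb{ZP}[X_2^{\pm1},\dots,X_l^{\pm1}]$. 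So $Y\in\mathbb{ZP}[\mathbf X_1^{\pm1}]$ will follow once I also check that the $c_{-r}X_1^{-r}$-part, rewritten via $X_1^{-r}=(X_1')^{r}(P_{b^1}^r)^{-1}$ up to the monomial $X^{e_1'}$ twist, has coefficients landing in $\mathbb{ZP}$; this is exactly the content of Lemma~(2)--(3) in direction $1$. The same rewriting in directions $2,\dots,l$ handles those $\mathbf X_k$; here I use that the exchange binomials/$P_{b^k}^r$ in different directions are built from disjoint clusters and that $P_{b^k}^r$ is central in the relevant sub-supertorus (stated in the Lemma's proof), so the successive substitutions do not interfere.

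The main obstacle I anticipate is the bookkeeping of the sign-and-power-of-$q$ twists $(-1)^{\tau(e_k,e_{k'})}q^{\Lambda(\cdot,\cdot)/2}$ when performing the substitution $X_1=P_1(X_1')^{-1}$ and its analogues simultaneously in several directions: one must verify that the odd-variable contributions $\sum_{(k,k')\in S}(-1)^{\tau(e_k,e_{k'})}X^{e_k+e_{k'}}$ appearing in $P_{b^j}^r$ genuinely commute past $X_j, X_j'$ up to the expected scalar, and that no ``$X^{e+f}=0$ when $(e+f)_j\geq2$'' collision secretly kills a term that the classical argument keeps. I would isolate this as a short sublemma: for distinct directions $i\neq j$ in $\{1,\dots,l\}$, conjugation by $X_i^{\pm1}$ (or $X_i'^{\pm1}$) fixes $\mathbb{ZP}[X_j^{\pm1}]$ as a set and multiplies $X_j$ by a unit in $\mathbb{ZP}$, because $\Lambda_M$ restricted to the even directions is skew-symmetric integral and the odd directions are frozen; this relies on compatibility condition~\textbf{B}. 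Granting that sublemma, the rest is the standard Fomin--Zelevinsky descent: intersecting over $j=2,\dots,n$ (note $l\leq n$, and for $j>l$ the variable $X_j$ is frozen so $X_j'$ is not needed and the ring degenerates to $\mathbb{ZP}[\mathbf X^{\pm1}]$, which is why the stated intersection can be taken up to $n$) cuts down precisely to the simultaneous divisibility conditions characterizing $\bigcap_{k=1}^l\mathbb{ZP}[\mathbf X_k^{\pm1}]$, i.e. to $\mathcal U(M,\widetilde{\mathcal Q})$.
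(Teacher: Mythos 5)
First, a caveat about the comparison you were asked to survive: the paper's own ``proof'' of this proposition is the single sentence that the argument is similar to \cite{BZ1}, with details left to the reader, so there is no argument in the paper to match yours against; the only meaningful benchmark is the Berenstein--Zelevinsky argument that both you and the authors invoke. Measured against that, your proposal has the logical skeleton backwards, and the inclusion you dismiss as immediate is exactly the one that carries the content. Write $R_j = \mathbb{ZP}[X_1,X_1',X_2^{\pm1},\dots,X_{j-1}^{\pm1},X_j,X_j',X_{j+1}^{\pm1},\dots,X_l^{\pm1}]$. Your opening claim that each $R_j$ ``contains $\mathbb{ZP}[\mathbf X^{\pm1}]$'' is false: $X_1^{-1}\notin R_j$ in general, and the containment goes the other way, since $X_1'$ and $X_j'$ are Laurent polynomials in $\mathbf X$. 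The direction $\bigcap_j R_j\subseteq\mathcal U(M,\widetilde{\mathcal Q})$ is in fact the formal one: one checks that $X_1\in\mathbb{ZP}[\mathbf X_1^{\pm1}]$ (from the exchange relation) and that $X_j'$ involves only nonnegative powers of the variables other than $X_j$, whence $R_j\subseteq\mathbb{ZP}[\mathbf X^{\pm1}]\cap\mathbb{ZP}[\mathbf X_1^{\pm1}]\cap\mathbb{ZP}[\mathbf X_j^{\pm1}]$, and intersecting over $j=2,\dots,l$ covers every direction $k\in\{1,\dots,l\}$.

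The genuine gap is in the other direction, $\mathcal U(M,\widetilde{\mathcal Q})\subseteq\bigcap_j R_j$, which you assert ``follows directly from Lemma (2) in each direction $j$.'' It does not. Parts (2)--(3) of the Lemma identify $\mathbb{ZP}[\mathbf X^{\pm1}]\cap\mathbb{ZP}[\mathbf X_j^{\pm1}]$ with the ring in which only the single pair $(X_j,X_j')$ is un-inverted while $X_1$ remains invertible; the ring $R_j$ un-inverts two pairs, $(X_1,X_1')$ and $(X_j,X_j')$, simultaneously. What you actually need, and never prove, is the identity $\mathbb{ZP}[X_1,X_1',X_2^{\pm1},\dots,X_l^{\pm1}]\cap\mathbb{ZP}[X_1^{\pm1},\dots,X_j,X_j',\dots,X_l^{\pm1}]=R_j$, i.e.\ that an element satisfying the divisibility criterion $P_{b^1}^r\mid c_{-r}$ in its $X_1$-expansion and the analogous criterion in its $X_j$-expansion can be written in a single presentation realizing both at once. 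This two-direction compatibility step is the heart of the Berenstein--Zelevinsky proof, and it is precisely where the super-specific difficulties you correctly flag in your last paragraph --- the nilpotent odd summands of $P_{b^j}$, whether $P_{b^j}$ is a non-zero-divisor so that ``divisible by $P_{b^j}^s$'' is well posed, centrality, and possible $X^{e+f}=0$ collisions --- would actually have to be confronted. Your conjugation sublemma is a reasonable ingredient, but without that step the proof does not close; as written, the substantive inclusion is waved through and the formal one is belabored.
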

\begin{proof}
 The proof is similarly to \cite{BZ1}. We leave the details to the reader.
\end{proof}

\begin{lem}
\label{lem:rank2}
In the above notation, suppose $l = 2$. we have
$$\mathbb{ZP} [X_1,X'_1, X_2,X'_2]
= \mathbb{ZP}[X_1,X'_1, X_2,X''_2].$$
\end{lem}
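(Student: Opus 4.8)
The plan is to prove the two inclusions separately by exploiting Corollary~3 of the excerpt, which tells us that $X_2'' = X_2\bigl(1 + \sum_{\xi_i \to x_2 \to \xi_j}(-1)^{\tau(e_{n+i},e_{n+j})}M(e_{n+i}+e_{n+j})\bigr)$, i.e. $X_2''$ differs from $X_2$ only by a factor lying in the ground ring $\mathbb{ZP}$ (since all the odd generators $X_{n+1},\dots,X_{n+m}$ are elements of $\mathbb{ZP}$). Write $U = 1 + \sum_{\xi_i \to x_2 \to \xi_j}(-1)^{\tau(e_{n+i},e_{n+j})}M(e_{n+i}+e_{n+j})$, so $X_2'' = X_2 U$. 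The key observation is that $U$ is a \emph{unit} in $\mathbb{ZP}$: because each summand $M(e_{n+i}+e_{n+j})$ is a product of two distinct odd generators, one checks (as in Proposition~\ref{prop1}) that $U$ is invertible with inverse $U^{-1} = 1 - \sum_{\xi_i \to x_2 \to \xi_j}(-1)^{\tau(e_{n+i},e_{n+j})}M(e_{n+i}+e_{n+j})$ — the cross-terms vanish because any two such summands share a repeated odd index or multiply to a monomial with some $(e)_j \geq 2$.

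First I would establish the inclusion $\mathbb{ZP}[X_1,X_1',X_2,X_2''] \subseteq \mathbb{ZP}[X_1,X_1',X_2,X_2']$. Since $X_2'' = X_2 U$ with $U \in \mathbb{ZP}$, we immediately have $X_2'' \in \mathbb{ZP}[X_1,X_1',X_2,X_2']$ (in fact already in $\mathbb{ZP}[X_2]$), so the generators of the left-hand algebra all lie in the right-hand one. For the reverse inclusion $\mathbb{ZP}[X_1,X_1',X_2,X_2'] \subseteq \mathbb{ZP}[X_1,X_1',X_2,X_2'']$, the only generator needing attention is $X_2'$. Here I would use Corollary~3 again to write $X_2' = M(-e_2 + \sum_{b_{i2}>0}b_{i2}e_i) + M(-e_2 - \sum_{b_{i2}<0}b_{i2}e_i) + \sum_{\xi_i\to x_2\to\xi_j}(-1)^{\tau(e_{n+i},e_{n+j})}M(-e_2 + \sum_{b_{i2}>0}b_{i2}e_i + e_{n+i}+e_{n+j})$, and observe that factoring out $M(-e_2)$ (equivalently $X_2^{-1}$ up to a $\mathbb{ZP}$-unit) on the left, the exchange polynomial for $X_2'$ and that for $X_2''$ (computed from the mutated seed) are related: since $X_2'' = X_2 U$, we have $X_2 = X_2'' U^{-1}$ with $U^{-1} \in \mathbb{ZP}$, so $X_2^{-1}$ lies in $\mathbb{ZP}[X_2'']$, and hence $X_2$ itself is recovered; then $X_2'$, whose defining expression involves only $X_2^{-1}$ times elements of $\mathbb{ZP}[X_1^{\pm 1}]$ (the frozen and odd directions), lies in $\mathbb{ZP}[X_1,X_1',X_2,X_2'']$.

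The main obstacle — and the step I would spend the most care on — is verifying that $U$ (and likewise the analogous factor appearing when relating the exchange polynomials of $X_2'$ and $X_2''$) is genuinely invertible in $\mathbb{ZP}$ and that the factorization $X_2'' = X_2 U$ is exactly as Corollary~3 and Proposition~\ref{prop1} assert, with $U$ built only from the odd generators. This requires checking the nilpotence/annihilation of the cross terms: for two 2-paths $\xi_i \to x_2 \to \xi_j$ and $\xi_{i'}\to x_2\to\xi_{j'}$, the product $M(e_{n+i}+e_{n+j})M(e_{n+i'}+e_{n+j'})$ vanishes unless $\{i,j\}\cap\{i',j'\}=\emptyset$, and even then condition~\textbf{B} of the compatible-pair definition forces the surviving quadratic terms to cancel in pairs. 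Once this book-keeping is done, both inclusions follow formally and the proof is complete; I would also note in passing that the same argument shows $\mathbb{ZP}[X_1,X_1',X_2,X_2''] = \mathbb{ZP}[X_1,X_1',X_2]$, so the chain of equalities $\mathbb{ZP}[X_1,X_1',X_2,X_2'] \supseteq \mathbb{ZP}[X_1,X_1',X_2] = \mathbb{ZP}[X_1,X_1',X_2,X_2'']$ can be stated cleanly.
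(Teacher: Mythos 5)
Your proposal rests on a misreading of what $X_2''$ denotes in this lemma, and that misreading breaks the whole argument. Here $X_2''$ is \emph{not} the double mutation $\mu_2(\mu_2(X_2))$ of Proposition \ref{prop1}; it is the cluster variable produced by mutating in direction $2$ the seed $\mu_1(M,\widetilde{\mathcal{Q}})$ that has already been mutated in direction $1$ (this is the rank-$2$ alternating sequence $\mu_1,\mu_2,\mu_1,\dots$ underlying the upper-bound argument, exactly as in Berenstein--Zelevinsky). Concretely, the paper's proof uses
$$X_2''=X_2^{-1}+q^{\frac{d}{2}}X_2^{-1}(X_1')^{r}+\sum_{u\in I_j',\,v\in J_j'}(-1)^{\tau(e_u,e_v)}M(-e_2+e_u+e_v),$$
which involves $(X_1')^{r}$ and is a genuinely new element; it is emphatically not of the form $X_2U$ with $U$ a unit built from odd generators. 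Under your reading the lemma would collapse to $\mathbb{ZP}[X_1,X_1',X_2,X_2']=\mathbb{ZP}[X_1,X_1',X_2]$, which already fails in the purely even rank-$2$ case: there $X_2'$ is $X_2^{-1}$ times a polynomial in $X_1$, and $X_2^{-1}$ is not reachable from $X_1,X_1',X_2$. The actual content of the lemma is the identity $X_2''=S_1-S_2+S_3$ obtained by substituting the exchange relation $1=X_2X_2'-q^{-\frac{d}{2}}X^{re_1}-\cdots$ into the expression above so as to eliminate $X_2^{-1}$: one gets $S_1=q^{-\frac{d}{2}}(X_1')^{r}X_2'$, a term $S_2$ that is a polynomial in the cluster and odd variables, and a remainder $S_3$ that vanishes precisely because the mutation at $x_1$ is allowed. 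None of this appears in your argument.

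Even granting your interpretation, the reverse inclusion is broken at a specific step: from $X_2=X_2''U^{-1}$ you conclude that $X_2^{-1}$ lies in $\mathbb{ZP}[X_2'']$, but invertibility of $U$ only gives $X_2\in\mathbb{ZP}[X_2'']$; $X_2$ itself is not a unit in the polynomial ring $\mathbb{ZP}[X_1,X_1',X_2,X_2'']$, so the expression for $X_2'$, which genuinely contains $X_2^{-1}$, cannot be recovered this way. (Your verification that $U$ is a unit is fine and is essentially Proposition \ref{prop1}, but it answers a different question from the one the lemma poses.)
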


\begin{proof}
By symmetry, it is enough to show that
\begin{equation}
\label{eq:X2''}
X''_2 \in \mathbb{ZP}[X_1,X'_1, X_2,X'_2]\ .
\end{equation}
Without loss of generality, we consider the following extended quiver:

$$
\xymatrix@R=0.5cm{
{\color{red}I_i=\{\xi_1\cdots\xi_r\}}\ar@{->}[rd]&
{\color{red}J_i=\{\eta_1\cdots\eta_s\}}\ar@{<-}[d]&
{\color{red}I_j=\{\zeta_1\cdots\zeta_t\}}\ar@{->}[d]&
{\color{red}J_j=\{\e_1\cdots\e_u\}}\ar@{<-}[ld]\\
&x_i\ar@{->}[r]^{r}& x_j&
}
$$
where $r>0$ denotes the numbers of arrows from $x_i$ to $x_j$.
By the definition of quantum super-seed, $d:=r \lambda_{12}>0.$
\[
\begin{array}{ccl}
  X_1' & = & M(-e_1)+M(-e_1+re_2)+ \sum\limits_{i\in I_i, j\in J_i}(-1)^{\tau({e_i,e_j})}  M(-e_1+e_i+e_j) \\
   & =& (1+q^{-\frac{d}{2}}X_2^{r}+\sum\limits_{i\in I_i, j\in J_i}(-1)^{\tau({e_i,e_j})}  M(e_i+e_j))X^{-e_1}.
\end{array}
\]
A direct check shows that
\begin{equation}\label{x1r}
 (X_1')^r=\prod\limits_{s=1}^{r}(1+\sum\limits_{i\in I_i, j\in J_i}(-1)^{\tau({e_i,e_j})}  M(e_i+e_j)+q^{-\frac{d}{2}(2s-1)}X^{re_2})X^{-re_1} .
\end{equation}
Denote $P:=\prod\limits_{s=1}^{r}(1+\sum\limits_{i\in I_i, j\in J_i}(-1)^{\tau({e_i,e_j})}  M(e_i+e_j)+q^{-\frac{d}{2}(2s-1)}X^{re_2})
=\prod\limits_{s=1}^{r}(C+q^{-\frac{d}{2}(2s-1)}X^{re_2})$.

\[
\begin{array}{ccl}
  X_2' & = & M(-e_2)+M(-e_2+re_1)+ \sum\limits_{k\in I_j, l\in J_j}(-1)^{\tau({e_k,e_l})}  M(-e_2+re_1+e_k+e_l) \\
   & =& X_2^{-1}+q^{-\frac{d}{2}}X_2^{-1}X^{re_1}+\sum\limits_{k\in I_j, l\in J_j}(-1)^{\tau({e_k,e_l})}q^{-\frac{d}{2}}X_2^{-1}X^{re_1}M(e_k+e_l).
\end{array}
\]

\[
 X_2 X_2' = 1+q^{-\frac{d}{2}}X^{re_1}+\sum\limits_{k\in I_j, l\in J_j}(-1)^{\tau({e_k,e_l})}q^{-\frac{d}{2}}X^{re_1}M(e_k+e_l).
\]
Then, we have
\begin{equation}\label{1}
  1=X_2 X_2'-q^{-\frac{d}{2}}X^{re_1}-\sum\limits_{k\in I_j, l\in J_j}(-1)^{\tau({e_k,e_l})}q^{-\frac{d}{2}}X^{re_1}M(e_k+e_l).
\end{equation}

On the other hand, $X_2^{''}=X_2^{-1}+q^{\frac{d}{2}}X_2^{-1}(X_1')^r+\sum\limits_{u\in I_j', v\in J_j'}(-1)^{\tau({e_u,e_v})}M(-e_2+e_u+e_v),$
where $I_j'=I_j\cup I_i$, $J_j'=J_j\cup J_i$.

Using \eqref{1},we rewrite
\[
\begin{array}{ccl}
 X_2^{''} & =& X_2^{-1}+q^{\frac{d}{2}}X_2^{-1}(X_1')^r+\sum\limits_{u\in I_j', v\in J_j'}(-1)^{\tau({e_u,e_v})}M(-e_2+e_u+e_v) \\
 & = & X_2^{-1}+\sum\limits_{u\in I_j', v\in J_j'}(-1)^{\tau({e_u,e_v})}M(-e_2+e_u+e_v)\\
 &  &+ q^{\frac{d}{2}}X_2^{-1}(X_1')^r(X_2 X_2'-q^{-\frac{d}{2}}X^{re_1}-\sum\limits_{k\in I_j, l\in J_j}(-1)^{\tau({e_k,e_l})}q^{-\frac{d}{2}}X^{re_1}M(e_k+e_l))\\
 & = & S_1-S_2+S_3,
\end{array}
\]
where
\[
\begin{array}{l}
\medskip
  S_1=q^{\frac{d}{2}}X_2^{-1}(X_1')^rX_2 X_2', \\
\medskip
  S_2=q^{\frac{d}{2}}X_2^{-1}(P-C)X^{-re_1}(q^{-\frac{d}{2}}X^{re_1}-\sum\limits_{k\in I_j, l\in J_j}(-1)^{\tau({e_k,e_l})}q^{-\frac{d}{2}}X^{re_1}M(e_k+e_l)), \\
\medskip
  S_3=X_2^{-1}+\sum\limits_{u\in I_j', v\in J_j'}(-1)^{\tau({e_u,e_v})}M(-e_2+e_u+e_v)\\
\medskip
  -q^{\frac{d}{2}}X_2^{-1}CX^{-re_1}(q^{-\frac{d}{2}}X^{re_1}-\sum\limits_{k\in I_j, l\in J_j}(-1)^{\tau({e_k,e_l})}q^{-\frac{d}{2}}X^{re_1}M(e_k+e_l))
\end{array}
\]
To complete the proof, we will show that
$$S_1, S_2 \in \mathbb{ZP}[X_1,X'_1, X_2,X'_2],
\quad S_3 = 0.$$

First, using \eqref{x1r},
$$ S_1=q^{\frac{d}{2}}X_2^{-1}PX^{-re_1}X_2 X_2'=q^{-\frac{d}{2}}PX^{-re_1}X_2'=q^{-\frac{d}{2}}(X_1')^r X_2'. $$

For $S_2$, we notice that $P-C$ is a polynomial in $X_2$ with coefficients in $\mathbb{Z}[q^{\pm 1/2}]$ and zero constant term.

Finally, $S_3=0$ is guaranteed by the assumption that the mutation $\mu_i$ is allowed.
This complete the proof of Lemma ~\ref{lem:rank2}.
\end{proof}

Since the quantum cluster superalgebra $\mathcal{A}(\widetilde{Q})$ is a subalgebra of $\mathcal{U}(M, \tilde B)$,together with the above several lemmas. We obtained the main theorem of this section.

\begin{thm}
For every extended quiver~$\widetilde{\mathcal{Q}}$,and the initial quantum super-seed $(M,\widetilde{\mathcal{Q}})$, the cluster variable $Y$
obtained by any series of allowed mutations are in the ring $\mathbb{ZP}[\mathbf{X}^{\pm 1}]$.
\end{thm}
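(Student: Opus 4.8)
The plan is to mimic the classical argument of Fomin--Zelevinsky and its quantum counterpart in \cite{BZ1}, reducing the Laurent property for an arbitrary extended quiver to the rank-two situation treated in Lemma~\ref{lem:rank2} and the intersection description of $\mathcal{U}(M,\widetilde{\mathcal{Q}})$ in the preceding proposition. First I would recall that the quantum cluster superalgebra $\mathcal{A}(\widetilde{\mathcal{Q}})$ is, by definition, the $\mathbb{ZP}$-subalgebra of $\mathcal{F}$ generated by all cluster variables obtained from $(M,\widetilde{\mathcal{Q}})$ by allowed mutation sequences (together with the frozen/odd elements in $\mathbf{C}$). Since every initial cluster variable $X_1,\dots,X_l$ together with $\mathbf{C}$ clearly lies in $\mathbb{ZP}[\mathbf{X}^{\pm 1}]$, and since $\mathbb{ZP}[\mathbf{X}^{\pm 1}]$ is the quantum supertorus of the initial seed, it suffices to show that $\mathcal{A}(\widetilde{\mathcal{Q}}) \subseteq \mathcal{U}(M,\widetilde{\mathcal{Q}})$, because $\mathcal{U}(M,\widetilde{\mathcal{Q}}) \subseteq \mathbb{ZP}[\mathbf{X}^{\pm 1}]$ by its very definition as an intersection.

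The heart of the argument is therefore to prove that $\mathcal{U}(M,\widetilde{\mathcal{Q}})$ is stable under every allowed mutation $\mu_k$, i.e. that $\mathcal{U}(M,\widetilde{\mathcal{Q}}) = \mathcal{U}(\mu_k(M,\widetilde{\mathcal{Q}}))$; once this is known, an induction on the length of the mutation sequence shows that every cluster variable of every seed in the mutation class lies in $\mathcal{U}(M,\widetilde{\mathcal{Q}})$, hence in $\mathbb{ZP}[\mathbf{X}^{\pm 1}]$. To establish mutation-invariance of $\mathcal{U}$, I would invoke the Proposition above, which rewrites $\mathcal{U}(M,\widetilde{\mathcal{Q}})$ as $\bigcap_{j=2}^{n}\mathbb{ZP}[X_1,X_1',X_2^{\pm1},\dots,X_{j-1}^{\pm1},X_j,X_j',X_{j+1}^{\pm1},\dots,X_l^{\pm1}]$ --- a form in which each factor involves only two ``non-invertible'' directions. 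Comparing this description for the seed $(M,\widetilde{\mathcal{Q}})$ with the corresponding one for $\mu_k(M,\widetilde{\mathcal{Q}})$, the two intersections share all but finitely many factors, and matching the remaining factors reduces exactly to statements of the type proved in Lemma~\ref{lem:rank2}, namely $\mathbb{ZP}[X_1,X_1',X_2,X_2'] = \mathbb{ZP}[X_1,X_1',X_2,X_2'']$ (and its variants with the roles of the indices permuted). Part~(3) of the first Lemma of this section, the divisibility criterion in terms of the central elements $P_{b^j}^r$, is what allows one to recognize membership in these two-direction subalgebras concretely.

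The main obstacle, and the place where the super-case genuinely departs from \cite{BZ1}, is controlling the odd-variable contributions: the exchange relation for $X_k$ carries the extra term $\sum_{\xi_i\to x_k\to\xi_j}(-1)^{\tau(e_{n+i},e_{n+j})}M(\cdots + e_{n+i}+e_{n+j})$, and mutation of quantum super-seeds is not involutive (Remark after the Corollary), so one must carry $X_k''$ rather than $X_k$ in the rank-two reduction, and verify that the failure of involutivity is absorbed by Proposition~\ref{prop1}, which says $\mathcal{A}(\widetilde{\mathcal{Q}}) = \mathcal{A}(\mu_k^2(\widetilde{\mathcal{Q}}))$ and that $X_k$ lies in the algebra generated by $X_k''$ over $\mathbb{ZP}$. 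Concretely, the compatibility condition~\textbf{B} ($\lambda_{i,k} = -\lambda_{i,l}$ for odd $x_k,x_l$ joined by a $2$-path) and the relation $(X^{e_{n+i}})^2 = 0$ are precisely what keep the extra odd terms polynomial (with nilpotent coefficients and no denominators) in the $S_2, S_3$ bookkeeping of Lemma~\ref{lem:rank2}; I would check that this persists through the general reduction. Granting the rank-two lemma and the intersection proposition, the remaining work is the clean but slightly tedious verification that ``all but finitely many factors agree'' in the two intersection descriptions, and that the differing factors are handled by Lemma~\ref{lem:rank2} applied inside the localization $\mathbb{ZP}[X_2^{\pm1},\dots,X_l^{\pm1}]$ with $X_1,X_k$ playing the special roles; this is where I expect the bulk of the careful argument to reside, but no new idea beyond what has already been set up.
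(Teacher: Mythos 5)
Your proposal follows essentially the same route as the paper: the paper's entire argument for this theorem is the one-sentence observation that $\mathcal{A}(\widetilde{\mathcal{Q}})$ sits inside $\mathcal{U}(M,\widetilde{\mathcal{Q}})\subseteq\mathbb{ZP}[\mathbf{X}^{\pm 1}]$ ``together with the above several lemmas,'' i.e.\ exactly the Berenstein--Zelevinsky reduction via the intersection description of $\mathcal{U}$ and the rank-two lemma that you outline. If anything, your write-up supplies more of the missing detail (mutation-invariance of $\mathcal{U}$, the role of Proposition~\ref{prop1} in absorbing the non-involutivity, and the odd-term bookkeeping) than the paper itself does.
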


\noindent{\bf Acknowledgments}\, \,
This work is supported by the NSFC under numbers 11475178 and 11571119.

\end{document}